\documentclass[12pt]{article}
\usepackage{amsfonts,amsthm}
\usepackage{amssymb,amsmath}
\usepackage{enumerate}
\usepackage{amsmath}

\usepackage[dvipsnames]{xcolor}
\usepackage{hyperref}
\hypersetup{
    colorlinks=true,
    citecolor=blue,
    linkcolor=blue,
    filecolor=magenta,
    urlcolor=blue,
    pdftitle={Overleaf Example},
    pdfpagemode=FullScreen,
    }

\input{amssym.def}

\newtheorem{Definition}{Definition}[section]
\newtheorem{Theorem}[Definition]{Theorem}
\newtheorem{Lemma}[Definition]{Lemma}
\newtheorem{Proposition}[Definition]{Proposition}
\newtheorem{Corollary}[Definition]{Corollary}
\newtheorem{Example}[Definition]{Example}
\newtheorem{Remark}[Definition]{Remark}

\newcommand{\be}{\begin{equation}}
\newcommand{\ee}{\end{equation}}

\begin{document}

\title{\bf Quasi S-n-ideals in commutative semirings}
\author{\bf Amaresh Mahato ${^{1}}$ \footnote {e-mail : amaresh.blg2015@gmail.com }, \bf  Saikat Das ${^{1}}$ \footnote {e-mail : saikatofficial607@gmail.com}, \bf Manasi Mandal ${^{1}}$\footnote {e-mail : manasi$_{-}$ju@yahoo.in}, \bf Sampad Das ${^{2}}$ \footnote {e-mail : jumathsampad@gmail.com}\ \\
{\small ${^{1}}$ Department of Mathematics, Jadavpur University, Kolkata -- 700032, India}\\
{\small ${^ { 2 }}$ Department of Mathematics, Kanyashree University, Nadia-741101, India.}} 
\date{}
\maketitle

\begin{abstract}

 Let $R$ be a commutative semiring with unity $(1\neq0)$, and let $S$ be a proper multiplicatively closed subset of $R$. In this paper, we introduce the notion of quasi $S$-$n$-ideals in commutative semirings. We the study basic property of quasi $S$-$n$-ideals and establish their relationships with quasi $n$-ideals, $S$-$n$-ideals, $S$-prime ideals, and $S$-primary ideals. We also investigate their behavior under localization. Finally, we determine quasi $S$-$n$-ideals in the quotient polynomial semiring $R[x]/\langle x^m\rangle$ and study their natural extensions via the constructions of idealization and amalgamation.

\noindent
\textbf{Keywords  : quasi $S$-$n$-ideals, quasi $n$-ideals, semiring. } 
 
\noindent
\textbf{Mathematics Subject Classification }:  13A15, 16Y60.

\end{abstract}

\section{Introduction}
All semirings are taken to be commutative with nonzero identity throughout this work. Let $R$ be a semiring. A subset $S$ of $R$ is called a multiplicatively closed set (m.c.s.) if $0\notin S$, $1\in S$, and
$st\in S$ for all $s,t\in S$. The sets of units, regular elements, zero-divisors and nilpotent elements of $R$ are denoted by $U(R)$, $Reg(R)$, $Z(R)$, and $N(R)$, respectively. The radical of an ideal $I$ is represented by $\sqrt{I}$ and defined as $\sqrt{I}=\{a \in R : a^n \in I, n \in \mathbb{N} \}$. In particular, $\sqrt{(0)}=N(R)$, the nilradical of $R$.

Prime ideals and primary ideals are the most fundamental concepts in the ideal theory of rings and semirings.
Over the past few years, these notions have been generalized in several directions. One important generalization is the concept of an $S$-prime ideal, an ideal $I$ disjoint from $S$ is said to be $S$-prime ideal of $R$ if there exists an element $s \in S$ such that whenever $a,b \in R$ and $ab\in I $  either $sa\in I$ or $sb \in I$, introduced by Hamed and Malek \cite{hm}. Later, Massoud \cite{ma} introduced the notion of an $S$-primary ideal. Since then several related concepts, including quasi $S$-primary ideals, $S$-$k$-primary ideals and $S$-$k$-primary decompositions have been investigated by many authors \cite{ko,mo,ma,am}. These studies have led to the development of the rich theory  of $S$-ideals in commutative semirings, including localization results, decomposition theorems and characterizations of $S$-prime and $S$-primary ideals.

Another direction of generalization is based on nilpotent elements. In \cite{tk} introduced the notion of an $n$-ideal in ring theory.  The concept of $n$-ideals has been further generalized in ring theory, giving rise to several related notions such as quasi $n$-ideals, $S$-$n$-ideals and $J$-ideals \cite{am1,al,bo,kh}. In the setting of commutative semirings, H. K. Ranote \cite{ra} introduced the notion of $n$-ideal. A proper ideal $I$ of a semiring $R$ is called $n$-ideal whenever $a,b \in R$ and $ab \in I$ implies either $a\in I$ or $b\in N(R)$.



 Here we introduce the notion of a quasi $n$-ideal and quasi $S$-$n$-ideal as a common generalization of $n$-ideals. A proper ideal $I$ of a semiring $R$ is called a quasi $S$-$n$-ideal if there exists an element $s\in S$ such that for all $a,b\in R$, whenever $ab\in I$ implies either $sa^2\in I$ or $sb\in N(R)$. Thus the definition of a quasi $S$-$n$-ideal weakens that of an $S$-$n$-ideal by replacing the condition $sa\in I$ with the weaker condition $sa^2\in I$.
Quasi $S$-$n$-ideals provide a natural link between $S$-$n$-ideals and $S$-primary ideals. Every $S$-$n$-ideal is a quasi $S$-$n$-ideal, whereas the converse does not hold in general. This naturally raises the question of which properties of $S$-$n$-ideals extend to quasi $S$-$n$-ideals and how these ideals are related to other important classes of ideals in commutative semirings. Motivated by these questions, we undertake a systematic study of quasi $S$-$n$-ideals and investigate their algebraic and structural properties. Also motivated by the work Paykan \cite{pk} on ideals in quotient polynomial ring, we study the corresponding relationships between ideals in quotient polynomial semirings.


Throughout this paper, we study quasi $S$-$n$-ideals in commutative semirings. In section~\ref{9}, we study basic properties and characterizations of quasi $S$-$n$-ideals. In particular, we prove that if $I$ is a quasi $S$-$n$-ideal of $R$, then there exists an element $s \in S$ such that $s\sqrt{I}\subseteq N(R)$ ($cf.$ Proposition \ref{2.3}).  We also show that when  $\sqrt{I}$ is a quasi $S$-$n$-ideal of $R$ and $\sqrt{I}=N(R)$, the $\sqrt{I}$ is an $S$-prime ideal of $R$ ($cf.$ Theorem \ref{2.10}). We further investigate the relationships between quasi $S$-$n$-ideals and several well-known classes of ideals, including $S$-$n$-ideals, $S$-prime ideals and $S$-primary ideals. Furthermore, we study their behavior under localization and prove that under the condition $N(S^{-1}R)=S^{-1}N(R)$, a proper ideal $I$ of $R$ is a quasi $S$-$n$-ideal if and only if $S^{-1}I$ is a quasi $n$-ideal of $S^{-1}R$ ($cf.$ Theorem \ref{8}). We then consider quotient polynomial semirings and establish that the ideal $J=I+Rx+Rx^2+\cdots+Rx^{m-1}+\langle x^m\rangle$ of $R[x]/\langle x^m\rangle$ is a quasi $\overline{S}$-$n$-ideal if and only if $I$ is a quasi $S$-$n$-ideal of $R$ ($cf.$ Theorem \ref{12}). In section \ref{2}, we investigate quasi $S$-$n$-ideals under idealization and amalgamation and obtain corresponding results under suitable assumptions.



\section {Quasi $S$-$n$-ideal} \label{9}

\begin{Definition} 
Let $R$ be a commutative semiring and $I$ be a proper ideal of $R$. Then $I$ is called a quasi $n$-ideal of $R$ if for every $a,b \in R$, $ab\in I \Longrightarrow a^{2}\in I \ \text{or}\ b\in N(R)$.
\end{Definition}

\begin{Definition} \label{2.2}
Let $R$ be a commutative semiring and let $S$ be a multiplicatively closed subset of $R$. A proper ideal $I$ of $R$ satisfying $I\cap S=\phi$ is called a quasi $S$-$n$-ideal of $R$ if there exists a fixed element $s \in S$ such that, whenever $a,b\in R$ and $ab\in I$, either $sa^{2}\in I$ or $sb\in N(R)$, where $N(R)=\{x\in R\mid x^{m}=0\text{ for some }m\in\mathbb{N}\}$ denotes the nilradical of $R$.

The fixed element $s$ is called quasi $S$-$n$ element of $I$.
\end{Definition}

Clearly, when $S=\{1\}$, quasi-$S$-$n$-ideals are precisely the quasi $n$-ideals.

\begin{Example}
$(i)$ Let $R=\mathbb{Z}_0^+$ be the semiring of non-negative integers and let $I=(0)$ be the zero ideal of $R$. The nilradical of $R$ is $N(R)=\{0\}$. Let $a,b\in R$ with $ab\in I$. Then $ab=0$, which implies either $a=0$ or $b=0$. If $a=0$, then $sa^{2}=0\in I$. If $b=0$, then $sb\in N(R)$. Hence, $ab\in I \Longrightarrow sa^{2}\in I \text{ or } sb\in N(R)$, and therefore $I$ is a quasi $S$-$n$-ideal of $R$  for any multiplicatively closed subset $S$ of $R$.

   $ (ii)$ Let $R=\mathbb{Z}_{12}^+$ and let $S=\{3^n : n\geq 0\}=\{1,3,9\}$. Let an ideal $I=(2)$. Then $I \cap S = \phi$. The nilradical of $R$ is $N(R)=\{0,6\}$. Take $s=3 \in S$. If $ab \in I$, then $ab$ is even. Hence either $a$ is even or $b$ is even. If $a$ is even, then $a^2$ is even and $3a^2 \in I$. If $b$ is even, then $3b \in \{0,6\}=N(R)$. Thus $I$ is a quasi $S$-$n$-ideal but not a quasi $n$-ideal of $R=\mathbb{Z}_{12}^+$. Since $2 \cdot 3=6 \in I$ but $3^2 \in I$ and $2 \notin N(R)$.

  $ (iii)$  If we take  $R=\mathbb{Z}_{p^k q}^+$ and $S=\{q^n : n\geq 0\}$ then quasi $S$-$n$-ideal are $(0)$ and $(p^n)$, for all $1 \le n \le k$, and quasi $n$- ideal for only $I=(0)$.

\end{Example}

\begin{Proposition}\label{2.3}
Let $R$ be a commutative semiring and let $S$ be a multiplicatively closed subset of $R$. Let $I$ be a quasi $S$-$n$-ideal of $R$, there exists $s\in S$ such that $s\sqrt{I} \subseteq N(R)$.
\end{Proposition}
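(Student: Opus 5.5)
The plan is to use the quasi $S$-$n$ element $s$ of $I$ itself, and to apply the defining condition of Definition~\ref{2.2} with the first factor equal to $1$. Taking $a=1$ makes the alternative $sa^{2}\in I$ impossible, so the definition is forced to yield the nilpotency alternative.

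Fix $s\in S$ as in Definition~\ref{2.2} and let $x\in\sqrt{I}$, so that $x^{n}\in I$ for some $n\in\mathbb{N}$. Write $x^{n}=1\cdot x^{n}$ and apply the definition with $a=1$ and $b=x^{n}$. This gives either $s\cdot 1^{2}=s\in I$ or $sx^{n}\in N(R)$. The first alternative would give $s\in I\cap S$, contradicting $I\cap S=\phi$. Hence $sx^{n}\in N(R)$.

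Next I pass from $sx^{n}$ to $sx$. We have $(sx)^{n}=s^{n-1}(sx^{n})$, and this lies in $N(R)$ because $N(R)$ is an ideal, or directly: if $(sx^{n})^{m}=0$ then $(sx)^{nm}=s^{(n-1)m}(sx^{n})^{m}=0$. So $sx$ is nilpotent, i.e.\ $sx\in N(R)$. Since $x\in\sqrt{I}$ was arbitrary and $s$ does not depend on $x$, we get $s\sqrt{I}\subseteq N(R)$.

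I do not expect a real obstacle here. The only point needing care is the choice $a=1$, which turns the condition ``$sa^{2}\in I$'' into ``$s\in I$'' so that it can be excluded using $I\cap S=\phi$. The remaining steps only use commutativity of multiplication, so the fact that $R$ is merely a semiring, with no subtraction, causes no difficulty.
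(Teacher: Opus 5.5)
Your proof is correct and follows the paper's argument essentially verbatim: apply the defining condition to $x^{n}=1\cdot x^{n}\in I$, rule out $s\in I$ using $I\cap S=\phi$, and pass from $sx^{n}\in N(R)$ to $(sx)^{n}=s^{n-1}(sx^{n})\in N(R)$. You also state explicitly that $s$ is the fixed quasi $S$-$n$ element and does not depend on $x$, which is slightly clearer than the paper's wording.
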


\begin{proof}
Since $I$ is a quasi $S$-$n$-ideal, there exists an element $s \in S$ such that for all $a,b\in R$, $ab \in I$ implies $sa^2 \in I$ or $sb \in N(R)$. Let $x\in \sqrt{I}$. Then $x^n=1\cdot x^n\in I$. Therefore, there exists $s \in S$ such that $s \in I$ or $sx^n \in N(R)$. Since $I \cap S = \phi$, we have $s \notin I$. Therefore, $sx^n \in N(R) $ implies $ (sx)^n=s^{n-1} \cdot sx^n \in N(R)$ and so $sx \in N(R)$. Thus $s \sqrt{I} \subseteq N(R)$.
\end{proof}



\begin{Corollary}
Let $I$ be a quasi $n$-ideal of a commutative semiring $R$. Then $\sqrt{I}\subseteq N(R)$.
\end{Corollary}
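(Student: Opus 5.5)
The plan is to deduce the statement from Proposition \ref{2.3} by specializing to $S=\{1\}$. As remarked after Definition \ref{2.2}, the quasi $\{1\}$-$n$-ideals are exactly the quasi $n$-ideals. First I will check that $\{1\}$ is a legitimate multiplicatively closed set. It contains $1$, it is closed under products, and $0\notin\{1\}$ because $1\neq 0$.

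Next I will check that a quasi $n$-ideal $I$ satisfies $I\cap\{1\}=\phi$. This holds because $I$ is proper, so $1\notin I$. The defining condition of a quasi $\{1\}$-$n$-ideal, with the element $s=1$, then reads: $ab\in I$ implies $a^2\in I$ or $b\in N(R)$. This is precisely the quasi $n$-ideal condition, so $I$ is a quasi $\{1\}$-$n$-ideal.

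Proposition \ref{2.3} now provides some $s\in\{1\}$ with $s\sqrt{I}\subseteq N(R)$. The only choice is $s=1$, which gives $\sqrt{I}\subseteq N(R)$.

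If a self-contained argument is preferred, I will argue directly. Take $x\in\sqrt{I}$ with $x^n\in I$, and write $x^n=1\cdot x^n$. Applying the quasi $n$-ideal condition with $a=1$ and $b=x^n$ gives either $1=1^2\in I$ or $x^n\in N(R)$. The first alternative contradicts properness of $I$, so $x^n$ is nilpotent, and hence $x$ is nilpotent. There is no real obstacle here; the only point requiring care is using properness of $I$ to rule out $1\in I$.
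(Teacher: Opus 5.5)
Your proposal is correct and follows the paper's proof: the paper also simply takes $S=\{1\}$ in Proposition~\ref{2.3}, and your self-contained argument (apply the condition to $1\cdot x^n\in I$ and use properness to exclude $1\in I$) is exactly the proof of Proposition~\ref{2.3} with $s=1$. Your explicit checks that $\{1\}$ is multiplicatively closed and that $I\cap\{1\}=\phi$ fill in details the paper leaves implicit.
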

\begin{proof}
Taking $S=\{1\}$ in Proposition \ref{2.3}, we obtain $\sqrt{I}\subseteq N(R)$.
\end{proof}

The converse of the above Proposition \ref{2.3}  may not be hold.

\begin{Example}
Let $R=\mathbb{Z}^+_{60}$ and $S=\{5^n:n\in\mathbb{N}_0\}=\{1,5,25\}$. Let $I=(6)$ and so $\sqrt{I}=(6)$. Then $I\cap S=\phi$. Since $60=2^2\cdot 3\cdot 5$, the nilradical of $R$ is $N(R)=\sqrt{(0)}=(30)=\{0,30\}$. Now, for $s=5\in S$ and any $x\in \sqrt{I}$, we have $x=6k$ for some $k\in\mathbb{Z}^+_{60}$. Hence
$sx=5(6k)=30k\in (30)=N(R)$. Therefore, $5\sqrt{I} \subseteq N(R)$. However, $I=(6)$ is not a quasi $S$-$n$-ideal of $R$. Indeed, take $a=2$ and $b=3$. Then $ab=6\in I$. But there is no element $s \in S$ such that $sa^2 \in I$ or $sb \in N(R)$. Hence $I=(6)$ is not a quasi $S$-$n$-ideal of $R$. Therefore, the converse of the Proposition \ref{2.3} is not holds in general.

\end{Example}



Now in the following example shows that an $S$-prime ideal need not be a quasi $S$-$n$-ideal of $R$. However, under certain conditions, every $S$-prime ideal is a quasi $S$-$n$-ideal of $R$.
\begin{Example}
    In $R=\mathbb{Z}^+_{0}$ as an entire semiring, let $I=(6)$ and $S=\{2^n : n\geq 0\}$ then $I$ is a $S$-prime ideal of $R$. Since for any $a,b \in R$, if $a b \in I$ then there exists $s \in S$ such that $s  a \in I $ or $s b \in I$. But $I$ is not a quasi $S$-$n$-ideal of $R$. Since $N(R)=\{0\}$, $2\cdot3=6 \in I$ but there does not exists any $s \in S$ such that $s\cdot 2^2 \in I $ or $s\cdot3 \in N(R)$.
\end{Example}
\begin{Theorem}
    Let $R$ be a commutative semiring and let $S \subseteq R$ be a multiplicatively closed subset. If $I$ is an $S$-prime ideal of $R$ and $I = N(R)$, then $I$ is a quasi $S$-$n$-ideal of $R$.
\end{Theorem}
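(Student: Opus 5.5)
The plan is to use the $S$-prime element of $I$ directly as the quasi $S$-$n$ element, and to exploit the hypothesis $I=N(R)$ in both alternatives of the $S$-prime condition. Since $I$ is $S$-prime, $I$ is a proper ideal with $I\cap S=\phi$, and there is a fixed $s\in S$ such that $ab\in I$ implies $sa\in I$ or $sb\in I$. This same $s$ will serve in Definition \ref{2.2}.

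Take $a,b\in R$ with $ab\in I$. In the first case, $sa\in I$. Since $I$ is an ideal, $sa^2=(sa)a\in I$, which gives the first alternative of Definition \ref{2.2}. In the second case, $sb\in I$. Since $I=N(R)$, this says exactly that $sb\in N(R)$, which is the second alternative. So every product in $I$ satisfies one of the two required conditions with the fixed element $s$, and $I$ is a quasi $S$-$n$-ideal.

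No step is a real obstacle. The only points to check are bookkeeping ones: the properness of $I$ and the condition $I\cap S=\phi$ carry over verbatim from the definition of $S$-prime, and the element $s$ does not depend on $a$ and $b$, as Definition \ref{2.2} requires. In fact the full strength of $I=N(R)$ is not needed; the inclusion $I\subseteq N(R)$ is enough for the argument.
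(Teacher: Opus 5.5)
Your proof is correct and is essentially the paper's argument: you take the fixed $S$-prime element $s$ as the quasi $S$-$n$ element, obtain $sa^2=(sa)a\in I$ in the first case, and use $sb\in I=N(R)$ in the second. Your remarks that $s$ does not depend on $a,b$ and that only the inclusion $I\subseteq N(R)$ is needed are both accurate.
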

\begin{proof}
    Let $I$ is an $S$-prime ideal of $R$, then for any $a,b \in R$ if $ab \in I=N(R)$ then there exists $s\in S $ such that $sa \in I=N(R) \;\text{or}\; sb \in N(R)$. Now if $sa \in I=N(R)$ then $sa^2=sa\cdot a\in I$. Then $I$ is a quasi $S$-$n$-ideal of $R$. If not then $ sb \in N(R)$. In both case $sa^2\in I \quad \text{or} \quad sb\in I$. Therefore $I$ is a quasi $S$-$n$-ideal of $R$.
\end{proof}
\begin{Theorem} \label{2.10}
Let $R$ be a commutative semiring and let $S \subseteq R$ be a multiplicatively closed set. If $I$ is a quasi $S$-$n$-ideal of $R$ and $\sqrt{I} = N(R)$, then $\sqrt{I}$ is an $S$-prime ideal of $R$.
\end{Theorem}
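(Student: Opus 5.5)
Let $s\in S$ be the quasi $S$-$n$ element of $I$ given by Definition \ref{2.2}. The plan is to show that this same $s$ witnesses the $S$-prime property of $\sqrt{I}$. Two preliminary points come first. Since $I\cap S=\phi$ and $S$ is multiplicatively closed, $\sqrt{I}\cap S=\phi$: if $t\in S\cap\sqrt{I}$, then $t^n\in I\cap S$ for some $n$. Hence $\sqrt{I}=N(R)$ is proper and disjoint from $S$.

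Now let $a,b\in R$ with $ab\in\sqrt{I}$. Then $(ab)^n=a^nb^n\in I$ for some $n\in\mathbb{N}$. We apply the quasi $S$-$n$ condition to the factorization $a^n\cdot b^n\in I$. Either $s(a^n)^2=sa^{2n}\in I$, or $sb^n\in N(R)$.

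In the first case, $(sa)^{2n}=s^{2n-1}\cdot sa^{2n}\in I$, so $sa\in\sqrt{I}$. In the second case, $(sb)^n=s^{n-1}\cdot sb^n\in N(R)$, so $sb\in N(R)$, because $N(R)$ is a radical ideal (a nilpotent power forces nilpotence). By hypothesis $N(R)=\sqrt{I}$, so $sb\in\sqrt{I}$. Thus in every case $sa\in\sqrt{I}$ or $sb\in\sqrt{I}$, with $s$ independent of $a,b$, which is exactly the $S$-prime condition.

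The only delicate step is the reduction from $ab\in\sqrt{I}$ to a product lying in $I$ itself, since the quasi $S$-$n$ condition applies only to products in $I$. Passing to the powers $a^n,b^n$ handles this. One then has to absorb the extra powers of $s$ and the square back into radicals. This works because both $\sqrt{I}$ and $N(R)$ are ideals closed under taking roots, and these facts hold equally in semirings. The argument uses no subtraction, so it is valid in the semiring setting.
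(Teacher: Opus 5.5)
Your proposal is correct and follows essentially the same argument as the paper: pass from $ab\in\sqrt{I}$ to $a^nb^n\in I$, apply the quasi $S$-$n$ condition with the fixed element $s$, and absorb the extra powers back into $\sqrt{I}=N(R)$. You also explicitly check that $\sqrt{I}\cap S=\phi$, and hence that $\sqrt{I}$ is proper; the paper's proof of this theorem leaves that step implicit, though it proves the same fact in Theorem \ref{10}.
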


\begin{proof}
Assume that $I$ is a quasi $S$-$n$-ideal of $R$ and $\sqrt{I} = N(R)$. 
Let $a,b \in R$ such that $ab \in N(R) = \sqrt{I}$.
Then there exists $n \in \mathbb{N}$ such that $(ab)^n = a^n b^n \in I$.
Since $I$ is a quasi $S$-$n$-ideal, there exists $s \in S$ such that $a^n b^n \in I \;\Rightarrow\; s(a^n)^2 = s a^{2n} \in I \quad \text{or} \quad s b^n \in N(R)=\sqrt{I}$. If $s a^{2n} \in I $, then $sa \in \sqrt{I}= N(R)$. If $s b^n \in N(R)$, then $sb \in \sqrt{I}= N(R)$.
Therefore, $ab \in N(R) \;\Rightarrow\; sa \in N(R) \;\text{or}\; sb \in N(R)$.

Hence, $\sqrt{I}$ is an $S$-prime ideal of $R$. \end{proof}

Now we define  strongly quasi
$S$-primary ideal in commutative semiring similar as \cite{mo},  
 Let $I$ be a proper ideal of a commutative semiring $R$ and $S$ be a multiplicatively closed subset of $R$ satisfying $I\cap S=\phi$. Then $I$ is a strongly quasi $S$-primary ideal of $R$ if there exists an element $s\in S$ such that for all $a,b\in R$, $ab\in I \Longrightarrow sa^{2}\in I$
or  $sb\in\sqrt{I}$.

\begin{Theorem}

Let $R$ be a commutative semiring and let $S$ be a multiplicatively closed subset of $Reg(R)$. For a proper ideal $I$ of $R$ with $I\cap S=\phi$,

$I \text{ is a quasi } S\text{-}n\text{-ideal of }R
\quad\Longleftrightarrow\quad
I \text{ is a strongly quasi }S\text{-primary ideal of }R
\text{ and }\sqrt{I}=N(R)$.

\end{Theorem}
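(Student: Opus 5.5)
The plan is to prove the two implications separately. The reverse implication is essentially a substitution. The forward implication reduces to showing $\sqrt{I}=N(R)$, which I will get from Proposition \ref{2.3} together with the hypothesis $S\subseteq Reg(R)$.

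($\Leftarrow$) Suppose $I$ is a strongly quasi $S$-primary ideal with $\sqrt{I}=N(R)$, and let $s\in S$ be the element from that definition. Take $a,b\in R$ with $ab\in I$. Then either $sa^2\in I$ or $sb\in\sqrt{I}=N(R)$. This is exactly the defining condition of a quasi $S$-$n$-ideal with the same element $s$, and $I\cap S=\phi$ is assumed. So $I$ is a quasi $S$-$n$-ideal.

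($\Rightarrow$) Let $I$ be a quasi $S$-$n$-ideal with quasi $S$-$n$ element $s$.

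First, $I$ is strongly quasi $S$-primary. Since $0\in I$, we have $N(R)=\sqrt{(0)}\subseteq\sqrt{I}$. Hence whenever $ab\in I$, either $sa^2\in I$ or $sb\in N(R)\subseteq\sqrt{I}$.

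Second, we need $\sqrt{I}=N(R)$. The inclusion $N(R)\subseteq\sqrt{I}$ was just noted, so it remains to show $\sqrt{I}\subseteq N(R)$. By Proposition \ref{2.3}, $s\sqrt{I}\subseteq N(R)$ (its proof uses the same $s$). Take $x\in\sqrt{I}$. Then $sx$ is nilpotent, so $(sx)^n=s^nx^n=0$ for some $n$. Since $S$ is multiplicatively closed, $s^n\in S\subseteq Reg(R)$. Thus $s^n$ is not a zero-divisor, and $s^nx^n=0$ forces $x^n=0$. Hence $x\in N(R)$.

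The only point needing care is this cancellation step. It is the sole place the hypothesis $S\subseteq Reg(R)$ enters. It must be read as: $r\in Reg(R)$ and $ry=0$ imply $y=0$. This is the meaning of regular (non-zero-divisor) in the semiring setting, and no subtraction is required. Everything else is direct from the definitions.
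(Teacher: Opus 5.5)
Your proof is correct and follows the paper's argument. The reverse direction is direct substitution. In the forward direction, $N(R)\subseteq\sqrt{I}$ gives the strongly quasi $S$-primary condition, and Proposition~\ref{2.3} plus cancellation by a regular element gives $\sqrt{I}\subseteq N(R)$. Your cancellation step, which uses $s^n\in S\subseteq Reg(R)$ to pass from $s^nx^n=0$ to $x^n=0$, is stated slightly more carefully than the paper's ``since $s$ is not a zero divisor''.
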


\begin{proof}
$(\Rightarrow)$
Let $a,b \in R$ with $ab \in I$ and suppose that $sb \notin \sqrt{I}$. Then $s^nb^{n} \notin I$ for all $n \in \mathbb{N}$, which implies $sb \notin N(R)\subseteq \sqrt{I}$. Hence, by the quasi $S$-$n$-ideal condition, we must have $sa^{2} \in I$. Thus $I$ is strongly quasi-S-primary. Next, we show that $\sqrt{I} = N(R)$.

 Let $x \in \sqrt{I}$. Then $x^k = 1 \cdot x^k \in I$. Since  $I$ is a quasi $S$-$n$-ideal of $R$, by Proposition \ref{2.3}, $s\sqrt I \subseteq N(R)$ then there exists $n \in \mathbb{N}$ such that $(s x)^n = 0$. Since $s$ is not a zero divisor, it follows that $x^{n} = 0$ and hence $x \in N(R)$. Therefore \( \sqrt{I} \subseteq N(R) \) and we know $N(R)\subseteq \sqrt{I}$.
Hence, $\sqrt{I} = N(R)$.

$(\Leftarrow)$
Conversely, suppose that $I$ is a strongly quasi-$S$-primary ideal of $R$ and $\sqrt{I}=N(R)$. Let $a,b\in R$ be such that $ab\in I$. Then there exists some $s\in S$ such that $ sa^{2}\in I \quad\text{or}\quad sb\in\sqrt{I}=N(R)$. Therefore, by the Definition~\ref{2.2}, $I$ is a quasi $S$-$n$-ideal of $R$.
 \end{proof}

\begin{Example}
Let $R=\mathbb{Z}_{12}^+$ and let $S=\{3^n : n\geq 0\}=\{1,3,9\}$. Then $S$ is multiplicatively closed, but $3$ is a zero divisor, so $S \nsubseteq \operatorname{Reg}(R)$.
Let $I=(4)=\{0,4,8\}$. Then $N(R)=\{0,6\},$ and $ \sqrt{I}=\{0,2,4,6,8,10\}$. Thus $\sqrt{I} \neq N(R)$. Now let $ab \in I$. Since $ab$ is even, either $a$ is even or $b$ is even. If $a$ is even, then $a^2$ is divisible by $4$, and hence $3a^2 \in I$. If $b$ is even, then $3b \in \{0,6\}=N(R)$. Thus $I$ is a quasi $S$-$n$-ideal of $R$, but $\sqrt{I} \neq N(R)$. Hence the equivalence fails when $S \nsubseteq \operatorname{Reg}(R)$.
\end{Example}
\begin{Definition}
    Let $R$ be a commutative semiring and let $S$ be a multiplicatively closed subset of $R$. A proper ideal $I$ of $R$ satisfying $I\cap S=\phi$, $I$ is called a $S$-$n$-ideal of $R$ if there exists an element $s\in S$ such that whenever $a,b\in R$ and $ab \in I$, either $sa\in I$ or $sb\in N(R)$.
\end{Definition}

\begin{Theorem}\label{10}
Let $R$ be a commutative semiring and let $S$ be a multiplicatively closed subset of $R$. If $I$ is a quasi $S$-$n$-ideal of $R$, then $\sqrt{I}$ is an $S$-$n$-ideal of $R$.
\end{Theorem}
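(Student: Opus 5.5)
Fix the quasi $S$-$n$ element $s\in S$ of $I$ given by Definition \ref{2.2}; the same $s$ will serve as the $S$-$n$ element of $\sqrt{I}$. Before checking the product condition we need $\sqrt{I}$ to be an admissible candidate. It is an ideal; this is standard in commutative semirings, since the binomial expansion of $(x+y)^{m+n}$ only uses addition and multiplication. It is also disjoint from $S$: if $t\in S\cap\sqrt{I}$, then $t^k\in I\cap S$ for some $k$, because $S$ is multiplicatively closed, and this contradicts $I\cap S=\phi$. In particular $1\notin\sqrt{I}$, so $\sqrt{I}$ is proper.

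For the main condition, let $a,b\in R$ with $ab\in\sqrt{I}$. Choose $n\in\mathbb{N}$ with $(ab)^n=a^nb^n\in I$, and apply the quasi $S$-$n$ property of $I$ to the pair $a^n,b^n$. This gives either $s(a^n)^2=sa^{2n}\in I$ or $sb^n\in N(R)$.

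In the first case we want $sa\in\sqrt{I}$. Compute
\[
(sa)^{2n}=s^{2n-1}\cdot sa^{2n}\in I,
\]
since $I$ is an ideal. Hence $sa\in\sqrt{I}$.

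In the second case, $sb^n\in N(R)$ means $(sb^n)^m=0$ for some $m$. Then
\[
(sb)^{nm}=s^{(n-1)m}(sb^n)^m=0,
\]
so $sb\in N(R)$. Thus in either case $sa\in\sqrt{I}$ or $sb\in N(R)$, with the fixed element $s$, and $\sqrt{I}$ is an $S$-$n$-ideal.

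The only step needing care is the first case: we must absorb the extra power of $s$ correctly, so that we get a statement about $sa$ rather than about $s^{2n}a^{2n}$. This works because $I$ absorbs multiplication by $s^{2n-1}$. Everything else is routine manipulation of radicals and nilpotents. Proposition \ref{2.3} is not actually needed here, although it is consistent with the conclusion: it shows that $s\sqrt{I}\subseteq N(R)$.
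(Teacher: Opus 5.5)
Your proof is correct and follows the paper's own argument essentially step for step. You apply the quasi $S$-$n$ condition to $a^nb^n\in I$, absorb the extra powers of $s$ via $(sa)^{2n}=s^{2n-1}\cdot sa^{2n}$ and $(sb)^{nm}=s^{(n-1)m}(sb^n)^m$, and verify $\sqrt{I}\cap S=\phi$ from multiplicative closure, just as the paper does; you also state explicitly that $\sqrt{I}$ is a proper ideal, which the paper leaves implicit.
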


\begin{proof}
Assume that $I$ is a quasi $S$-$n$-ideal of $R$. Then there exists $s\in S$ such that for all $a,b\in R$, whenever $ab\in I$, either $sa^{2}\in I$ or $sb\in N(R)$. Let $x,y\in R$ be such that $xy\in\sqrt{I}$. Then there exists a positive integer $n$ such that $(xy)^n=x^ny^n\in I$. Since $I$ is a quasi $S$-$n$-ideal, $sx^{2n}\in I \quad\text{or}\quad sy^n\in N(R)$. If $sx^{2n}\in I$ implies $(sx)^{2n}=s^{2n-1} \cdot sx^{2n}\in I$ therefore $sx\in\sqrt{I}$. If $sy^n\in N(R)$ implies $(sy)^{n}=s^{n-1} \cdot sy^{n}\in N(R)$ then there exits $m \in \mathbb{N}$ such that $(sy)^{nm} = 0$ therefore $sy\in N(R)$.

It remains to show that $\sqrt{I}\cap S=\phi$. If not, let $t\in\sqrt{I}\cap S$. Then $t^k\in I$ for some $k\in\mathbb{N}$. Since $S$ is multiplicatively closed and $t\in S$, we also have $t^k\in S$. Hence $t^k\in I\cap S$, contradicting the assumption that $I\cap S=\phi$.

 Thus whenever $xy\in\sqrt{I}$, we have $sx\in\sqrt{I}$ or $sy\in N(R)$.
Hence $\sqrt{I}$ is an $S$-$n$-ideal of $R$.\end{proof}

\begin{Corollary}
Let $I$ be a quasi $n$-ideal of a commutative semiring $R$. Then $\sqrt{I}$ is an $n$-ideal of $R$.\end{Corollary}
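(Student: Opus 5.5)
The plan is to specialize Theorem \ref{10} to the trivial multiplicatively closed set $S=\{1\}$, in the same way the earlier corollary to Proposition \ref{2.3} was obtained. The only things to check are that the hypotheses match under this specialization and that the conclusion reads correctly.

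First, we translate the hypothesis. Take $S=\{1\}$, which is multiplicatively closed because $0\notin S$ (since $1\neq 0$), $1\in S$, and $1\cdot 1=1$. The only possible quasi $S$-$n$ element is then $s=1$, and the defining condition of Definition \ref{2.2} becomes: $ab\in I$ implies $a^2\in I$ or $b\in N(R)$. This is exactly the definition of a quasi $n$-ideal. The disjointness condition $I\cap S=\phi$ says $1\notin I$, which holds because $I$ is proper. Hence a quasi $n$-ideal $I$ is a quasi $\{1\}$-$n$-ideal.

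Next, we apply Theorem \ref{10}, which gives that $\sqrt{I}$ is an $S$-$n$-ideal for $S=\{1\}$. Unwinding the definition with $s=1$: $\sqrt{I}$ is proper, since it is disjoint from $\{1\}$, and $xy\in\sqrt{I}$ implies $x\in\sqrt{I}$ or $y\in N(R)$. This is precisely the definition of an $n$-ideal recalled in the introduction.

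There is no real obstacle here. The only point needing care is confirming that properness of $\sqrt{I}$ follows from $\sqrt{I}\cap\{1\}=\phi$, which was established inside the proof of Theorem \ref{10}. Alternatively, it can be seen directly: if $1\in\sqrt{I}$, then $1=1^k\in I$, contradicting that $I$ is proper.
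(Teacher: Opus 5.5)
Your proposal is correct and matches the paper's proof, which also obtains the corollary by setting $S=\{1\}$ in Theorem \ref{10}. Your additional checks make explicit what the paper leaves implicit: that quasi $\{1\}$-$n$-ideals are exactly quasi $n$-ideals, that $\{1\}$-$n$-ideals are exactly $n$-ideals, and that $\sqrt{I}$ is proper because $1\notin\sqrt{I}$.
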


\begin{proof}
Taking $S=\{1\}$ in Theorem \ref{10}, the result follows.
\end{proof}

\begin{Proposition}\label{propquasi}
Every $S$-$n$-ideal of a semiring $R$ is a quasi $S$-$n$-ideal of $R$.
\end{Proposition}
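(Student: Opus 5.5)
The plan is to verify Definition \ref{2.2} directly for an $S$-$n$-ideal, keeping the same witnessing element $s$.

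Let $I$ be an $S$-$n$-ideal of $R$. Then $I$ is proper and $I\cap S=\phi$, so these two conditions of Definition \ref{2.2} already hold. Fix the element $s\in S$ given by the $S$-$n$-ideal condition: for all $a,b\in R$, if $ab\in I$ then $sa\in I$ or $sb\in N(R)$. I will show that this same $s$ serves as a quasi $S$-$n$ element of $I$.

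Take $a,b\in R$ with $ab\in I$. By the choice of $s$, either $sa\in I$ or $sb\in N(R)$.
\begin{itemize}
\item If $sb\in N(R)$, the second alternative of Definition \ref{2.2} holds and nothing more is needed.
\item If $sa\in I$, then since $I$ is an ideal it absorbs multiplication by $a\in R$, so $sa^{2}=(sa)\cdot a\in I$. This is the first alternative of Definition \ref{2.2}.
\end{itemize}
In both cases $sa^{2}\in I$ or $sb\in N(R)$, so $I$ is a quasi $S$-$n$-ideal of $R$.

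There is no real obstacle here. The only point to check is that the witness $s$ stays the same: it must not depend on $a$ and $b$, because Definition \ref{2.2} requires a fixed $s$. Since we reuse the single $s$ supplied by the $S$-$n$-ideal hypothesis, this is automatic.

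Taking $S=\{1\}$ also gives that every $n$-ideal is a quasi $n$-ideal.
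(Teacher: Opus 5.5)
Your proof is correct and follows the paper's argument: you reuse the same witness $s$, and in the case $sa\in I$ you use absorption to get $sa^{2}=(sa)a\in I$. You also check explicitly that properness, $I\cap S=\phi$, and the fixedness of $s$ carry over, which the paper leaves implicit.
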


\begin{proof}
Let $I$ be an $S$-$n$-ideal of $R$. Then there exists an element $s \in S$ such that for all $a,b \in R$, $ab \in I \;\Rightarrow\; sa \in I \ \text{or}\ sb \in N(R)$. If $sb \in N(R)$, then the required condition holds. If $sa \in I$, then $(sa)a = sa^{2} \in I$. Thus, $ab \in I \;\Rightarrow\; sa^{2} \in I \ \text{or}\ sb \in N(R)$, which shows that $I$ is a quasi $S$-$n$-ideal of $R$.
\end{proof}

\begin{Corollary}
 Let $R$ be a commutative semiring. Then every $n$-ideal of $R$ is a quasi $n$-ideal of $R$. \end{Corollary}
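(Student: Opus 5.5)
The plan is to specialize Proposition \ref{propquasi} to the trivial multiplicatively closed set $S=\{1\}$, exactly as the earlier corollaries were obtained from Proposition \ref{2.3} and Theorem \ref{10}.

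First, check that $S=\{1\}$ is an admissible multiplicatively closed subset of $R$. It contains $1$, it is closed under multiplication, and $0\notin S$ because $1\neq 0$. Next, compare the definitions when $S=\{1\}$:
\begin{itemize}
\item the only available element is $s=1$, so the condition ``$sa\in I$ or $sb\in N(R)$'' reduces to ``$a\in I$ or $b\in N(R)$'';
\item a proper ideal $I$ automatically satisfies $I\cap\{1\}=\phi$, since $1\in I$ would force $I=R$.
\end{itemize}
Hence an $\{1\}$-$n$-ideal is precisely an $n$-ideal in the sense of Ranote. In the same way, a quasi $\{1\}$-$n$-ideal is precisely a quasi $n$-ideal, as the paper already remarked after Definition \ref{2.2}.

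With these identifications, Proposition \ref{propquasi} applied to $S=\{1\}$ gives the claim immediately. As a sanity check, one can also argue directly. Suppose $ab\in I$. Either $a\in I$, and then $a^2=a\cdot a\in I$ because $I$ is an ideal, or $b\in N(R)$. In both cases the quasi $n$-ideal condition holds.

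There is no real obstacle here. The only points needing care are confirming that $\{1\}$ is a legitimate multiplicatively closed set and that properness of $I$ supplies the disjointness condition $I\cap S=\phi$.
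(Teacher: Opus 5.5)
Your proposal is correct and matches the paper, which states this corollary without a separate proof as the $S=\{1\}$ case of Proposition \ref{propquasi}. Your direct check, that $a\in I$ forces $a^2\in I$, is just that proposition's argument with $s=1$.
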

 
 However, we see later in the example \ref{3.2}, the converse does not hold in general.

Recall that, for an ideal $I$ of a commutative semiring $R$ and an element $r \in R$, then the set $(I:r)=\{x \in R: rx\in I\}$ is an ideal of $R$.
\begin{Proposition}
Let $R$ be a commutative semiring and let $S \subseteq \operatorname{reg}(R)$ be a multiplicatively closed subset of $R$. Let $I$ be a quasi $S$-$n$-ideal of $R$ if and only if $(I:s)$ is a quasi $n$-ideal of $R$ for some $s \in S$.
\end{Proposition}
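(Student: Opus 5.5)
The plan is to prove the two implications separately. The forward direction uses the fixed quasi $S$-$n$ element $s$ of $I$ as the element in $(I:s)$. The converse follows almost immediately from $I\subseteq (I:s)$. Throughout, as in Definition \ref{2.2}, $I$ is taken to be proper with $I\cap S=\phi$; this is needed in the converse direction.

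$(\Rightarrow)$ Let $s\in S$ be a quasi $S$-$n$ element of $I$.

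First, $(I:s)$ is proper. If $1\in (I:s)$ then $s\in I\cap S$, contradicting $I\cap S=\phi$.

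Now let $a,b\in R$ with $ab\in (I:s)$, so $sab\in I$. Applying the quasi $S$-$n$ condition to $sab$ directly, with first factor $sa$, would only give $s(sa)^2=s^3a^2\in I$, which is not the form we need. So the key step is to write $sab=a\cdot(sb)$ and apply the condition to the pair $(a,sb)$. This yields two cases.

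\begin{enumerate}[(i)]
\item $sa^2\in I$. Then $a^2\in (I:s)$.
\item $s(sb)=s^2b\in N(R)$. Then $(s^2b)^k=s^{2k}b^k=0$ for some $k$. Since $S\subseteq \operatorname{reg}(R)$ is multiplicatively closed, $s^{2k}$ is regular. Hence $b^k=0$ and $b\in N(R)$.
\end{enumerate}

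Thus $(I:s)$ is a quasi $n$-ideal. I expect the only real obstacle to be spotting this factorization $a\cdot(sb)$. It is also the only point where the hypothesis $S\subseteq\operatorname{reg}(R)$ is used: regularity is what lets us cancel the power of $s$ from a nilpotent element.

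$(\Leftarrow)$ Suppose $(I:s)$ is a quasi $n$-ideal for some $s\in S$, and let $ab\in I$. Since $I\subseteq (I:s)$, we have $ab\in(I:s)$. The quasi $n$-ideal condition then gives either $a^2\in (I:s)$, i.e.\ $sa^2\in I$, or $b\in N(R)$. In the second case $sb\in N(R)$, since $N(R)$ is an ideal. Together with $I\cap S=\phi$, this shows that $I$ is a quasi $S$-$n$-ideal with quasi $S$-$n$ element $s$.
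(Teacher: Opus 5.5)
Your proof is correct and follows the paper's argument: the same factorization $sab=a(sb)$, cancellation of a power of the regular element $s$ to get $b\in N(R)$, and $I\subseteq (I:s)$ for the converse. You also check that $(I:s)$ is proper, which the paper omits. Your standing assumption $I\cap S=\phi$ is in fact automatic in the converse: if $t\in I\cap S$, then $1\cdot t\in (I:s)$ and $1\notin (I:s)$ force $t\in N(R)$, so $t^k=0$ for some $k$ while $t^k\in S$, giving $0\in S$, a contradiction.
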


\begin{proof}
 Since $I$ is a quasi $S$-$n$-ideal, there exists $s \in S$ such that for all $a,b \in R$, $ab \in I \;\Rightarrow\; s a^{2} \in I \ \text{or}\ s b \in N(R)$. 
 Let $a,b \in R$ such that $ab \in (I:s)$. Then $sab = a (sb)\in I$ implies $s(a)^2 \in I \ \text{or}\ s \cdot sb \in N(R)$. Now if $sa^2 \in I$ then $a^2 \in (I:s)$ and if $sa^2 \notin I$ then by defination $ s^2b \in N(R)$. Since $s \in S \subseteq \operatorname{reg}(R)$, it follows that $b \in N(R)$. Therefore, $ab \in (I:s) \;\Rightarrow\; a^2 \in (I:s) \ \text{or}\ b \in N(R)$, which shows that $(I:s)$ is a quasi $n$-ideal of $R$.

Converse, let $ab \in I$ that implies $ab \in (I:s) $. Since $(I:s)$ is an quasi $n$-ideal of $R$, $ab \in (I:s) \;\Rightarrow\; a^2 \in (I:s) \ \text{or}\ b \in N(R)$ which implies  $sa^2 \in I \ \text{or}\ b \in N(R)$ i,e. $sa^2 \in I \ \text{or}\ sb \in N(R)$, which shows that $I$ is an quasi
$S$-$n$-ideal of $R$.
\end{proof}

\begin{Theorem}
Let $R$ be a commutative semiring and let $S$ be a multiplicatively closed subset of $R$. 
Let $ \{I_i\}_{i\in \Delta}$ be 
a family of quasi $S$-$n$-ideals of $R$. Then $I = \displaystyle\bigcap_{i\in\Delta} I_i$ is a quasi $S$-$n$-ideal of $R$.
\end{Theorem}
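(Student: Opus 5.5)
The plan is to check the three requirements of Definition~\ref{2.2} for $I=\bigcap_{i\in\Delta}I_i$ and to build one quasi $S$-$n$ element for $I$ out of the elements $s_i$ of the $I_i$. The uniformity of that element is a genuine obstacle that I do not fully resolve; it is flagged below.

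The first two requirements are immediate. $I$ is an ideal contained in any one $I_{i}$, so $I$ is proper. Also $I\cap S\subseteq I_{i}\cap S=\phi$.

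For the main condition, for each $i\in\Delta$ fix a quasi $S$-$n$ element $s_i\in S$ of $I_i$. Two monotonicity facts let these elements be multiplied together:
\begin{enumerate}[(i)]
\item if $s_ia^2\in I_i$ and $t\in R$, then $ts_ia^2\in I_i$, because $I_i$ is an ideal;
\item if $s_ib\in N(R)$, then $ts_ib\in N(R)$, because $N(R)$ is an ideal.
\end{enumerate}
Hence whenever $u\in S$ is a multiple $u=t_is_i$ of every $s_i$, $u$ serves for each $I_i$ at once.

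Given $a,b\in R$ with $ab\in I$, I split into two cases.
\begin{enumerate}[(a)]
\item If $s_ib\in N(R)$ for some $i$, then $ub=t_i(s_ib)\in N(R)$ by (ii), and the condition holds.
\item Otherwise $s_ib\notin N(R)$ for every $i$. Since $ab\in I_i$, the quasi $S$-$n$ condition for $I_i$ gives $s_ia^2\in I_i$ for every $i$. Then (i) gives $ua^2\in I_i$ for all $i$, that is, $ua^2\in I$.
\end{enumerate}
So $u$ is a quasi $S$-$n$ element of $I$, and $I$ is a quasi $S$-$n$-ideal.

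Everything therefore reduces to producing $u$. For finite $\Delta$, take $u=\prod_i s_i\in S$, which is allowed because $S$ is multiplicatively closed. For arbitrary $\Delta$ this is the main obstacle, since an infinite product of the $s_i$ is not defined.

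What I would try first is to reduce to finitely many distinct elements. The condition in each case uses only the value $s_i$, not the index $i$. So if $\{s_i : i\in\Delta\}$ is a finite subset of $S$, or can be replaced by one, the product of the distinct values serves as $u$.

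Failing that, I would try to choose the $s_i$ from a fixed finite set. For this I would use Proposition~\ref{2.3}, which says $s_i\sqrt{I_i}\subseteq N(R)$, together with Theorem~\ref{10}, which says $\sqrt{I_i}$ is an $S$-$n$-ideal. The hope is that these allow replacing each $s_i$ by an element lying in a single finite subset of $S$.

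I do not expect this replacement to succeed in complete generality. The argument above proves the statement exactly when the $s_i$ can be chosen from a finite set, in particular for every finite family. This uniform-element step is where the stated claim for arbitrary families is in doubt, and it is the step I would scrutinise most closely.
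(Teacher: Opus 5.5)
Your argument for a finite family (take $u=\prod_i s_i$ and split on whether some $s_ib$ is nilpotent) is exactly the paper's proof. For arbitrary $\Delta$ the paper writes $s=\prod_{i\in\Delta}s_i$ and says it chooses ``a common $s\in S$ dominating all $s_i$'', which is not justified. Your suspicion is correct: the statement is false for infinite families, so no appeal to Proposition~\ref{2.3} or Theorem~\ref{10} can close the gap.

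Here is a counterexample. Let $D=\mathbb{Z}+X\mathbb{Q}[X]$. This is a domain in which $2$ is not a unit, since constant terms lie in $\mathbb{Z}$. On the other hand, $X=2^m\cdot(X/2^m)\in 2^mD$ for every $m$.

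Let $R=D[Y]/\langle Y^3\rangle$ and $S=\{2^n:n\ge 0\}$. Then $N(R)=YR$ is the set of elements with zero constant term. For $m\ge 1$ put $I_m=2^mYR=\{\alpha Y+\beta Y^2:\alpha,\beta\in 2^mD\}$; these are proper and disjoint from $S$.

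Each $I_m$ is a quasi $S$-$n$-ideal with element $2^m$. If $ab\in I_m$ and $b\notin N(R)$, then $b_0\neq 0$, so $a_0b_0=0$ forces $a_0=0$. Hence $2^ma^2=2^ma_1^2Y^2\in I_m$.

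Moreover, $2^k$ serves for $I_m$ only when $k\ge m$. To see this, take $a=Y$ and $b=2^m$: then $ab\in I_m$ and $2^kb\notin N(R)$, so one needs $2^kY^2\in I_m$. So the $s_i$ cannot be drawn from a finite set.

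Now $XY\in I:=\bigcap_m I_m$. For every $n$, the pair $a=Y$, $b=X$ satisfies:
\begin{itemize}
\item $ab\in I$;
\item $2^nX\notin N(R)$;
\item $2^nY^2\notin I_{n+1}$, because $2^n\notin 2^{n+1}D$.
\end{itemize}
Hence no element of $S$ is a quasi $S$-$n$ element of $I$. The theorem therefore holds only for finite families, or more generally for families admitting a common quasi $S$-$n$ element, which is exactly what your argument proves.
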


\begin{proof}
Since each $I_i$ is a proper ideal of $R$, their intersection $I=\displaystyle\bigcap_{i\in\Delta} I_i$ is also a proper ideal of $R$. For each $i\in \Delta$, since $I_i$ is a quasi $S$-$n$-ideal, there exists an element $s_i \in S$ such that for all $a,b \in R$, $ab \in I_i \implies s_i a^{2} \in I_i \ \text{or}\ s_i b \in N(R)$. Set $s =\displaystyle\prod_{i\in\Delta} s_i \in S$, where we choose a common $s\in S$ dominating all $s_i$. Let $a,b \in R$ with $ab \in I$.
Then $ab \in I_i$ for every $i\in\Delta$. Fix $i\in\Delta$.
If $s_i b \in N(R)$ for some $i$, then since $s b =(\displaystyle\prod_{j\neq i}s_j)(s_i b)$ and $N(R)$ is an ideal of $R$, we obtain $sb \in N(R)$. Otherwise, $s_i a^{2} \in I_i$ for all $i\in\Delta$. Multiplying by $\displaystyle\prod_{j\neq i}s_j$, we get
$sa^{2} \in I_i \quad \text{for all } i\in\Delta$, and hence $sa^{2} \in \displaystyle\bigcap_{i\in\Delta} I_i = I$. Thus, $ab \in I \implies sa^{2} \in I \ \text{or}\ sb \in N(R)$, which shows that $I$ is a quasi $S$-$n$-ideal of $R$. \end{proof}

\begin{Proposition}


Let $I$ be a proper ideal of a commutative semiring $R$ such that $I \cap S= \phi$ where $S$ is a multiplicatively closed subset of $R$, then the following statements are equivalent: 

\item[(i)] $I$ is a quasi $S$-$n$-ideal of $R$.
\item[(ii)] For every $a \in R$, either $s(a) \subseteq (I:a)$ or $s(I:a) \subseteq N(R)$.
\item[(iii)] For any ideals $J$ and $K$ of $R$ with $JK \subseteq I$, either $sJ^{2} \subseteq I$ or $sK \subseteq N(R)$.
\item[(iv)] For $b \in R$, if $sb \notin N(R)$, $s(I:b)^{2} \subseteq I$.

\end{Proposition}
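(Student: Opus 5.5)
The plan is to prove the cycle of implications (i)$\Rightarrow$(ii)$\Rightarrow$(i), (i)$\Rightarrow$(iii)$\Rightarrow$(i) and (i)$\Leftrightarrow$(iv). Throughout, $s$ denotes one fixed element of $S$: the quasi $S$-$n$ element of $I$ in (i), and the element occurring in (ii)--(iv) in the converse directions. The only tools are Definition \ref{2.2} and the description $(I:a)=\{x\in R: ax\in I\}$.

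First I would record what each condition says elementwise. For a principal ideal, $s(a)\subseteq(I:a)$ means $s r a\cdot a\in I$ for all $r\in R$. Taking $r=1$, this is equivalent to $sa^2\in I$, since $I$ is an ideal. Similarly, $s(I:a)\subseteq N(R)$ means that $sb\in N(R)$ whenever $ab\in I$.

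For (i)$\Rightarrow$(ii), fix $a$ and suppose $sa^2\notin I$. Then for every $b\in(I:a)$ we have $ab\in I$, so Definition \ref{2.2} forces $sb\in N(R)$, i.e.\ $s(I:a)\subseteq N(R)$. For (ii)$\Rightarrow$(i), let $ab\in I$. Then $b\in(I:a)$, so either $sa^2\in I$ or $sb\in N(R)$.

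For (iv), the two directions are similar. Assuming (i), suppose $sb\notin N(R)$ and take $a\in(I:b)$. Then $ab\in I$, and the quasi condition gives $sa^2\in I$. Conversely, assuming (iv), let $ab\in I$ with $sb\notin N(R)$. Then $a\in(I:b)$, so $sa^2\in I$.

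For (iii)$\Rightarrow$(i), I would apply (iii) to the principal ideals $J=(a)$ and $K=(b)$, using $(a)(b)=(ab)\subseteq I$. Since $a^2\in J^2$, this yields $sa^2\in I$ or $sb\in N(R)$.

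The main obstacle is (i)$\Rightarrow$(iii). Suppose $sK\not\subseteq N(R)$ and pick $k\in K$ with $sk\notin N(R)$. For each $a\in J$ we have $ak\in JK\subseteq I$, so $sa^2\in I$. Thus $s$ kills every square of an element of $J$. If $J^2$ is read as the product ideal, I would also need $saa'\in I$ for mixed products. In a ring this follows by polarization from $s(a+a')^2$, but subtraction is unavailable in a semiring. I would first try to obtain the mixed terms directly, using the pair $(a+a',\,k)$ together with the ideal property. If that fails, I would read $sJ^2\subseteq I$, consistently with the notation $s(I:b)^2$ in (iv), as $sa^2\in I$ for all $a\in J$; with that reading the argument above is complete.
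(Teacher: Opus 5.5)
Your argument is correct. It establishes exactly what the paper's proof establishes, but it is organized differently: you prove each of (ii), (iii), (iv) equivalent to (i), whereas the paper runs the cycle (i)$\Rightarrow$(ii)$\Rightarrow$(iii)$\Rightarrow$(iv)$\Rightarrow$(i). The key computation is shared. Your (i)$\Rightarrow$(iii) is the paper's (ii)$\Rightarrow$(iii): choose $k\in K$ with $sk\notin N(R)$ and apply the condition to $xk\in I$ for $x\in J$. Your (iii)$\Rightarrow$(i) via $J=(a)$, $K=(b)$ replaces the paper's (iii)$\Rightarrow$(iv) via $J=(I:b)$, $K=(b)$. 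The cycle costs four implications instead of your six. What your version buys is clarity about quantifiers. Your convention that one fixed $s$ serves in all four conditions is the right reading, and the paper's text slips on it: it writes ``for all $s\in S$'' in (ii)$\Rightarrow$(iii) and (iii)$\Rightarrow$(iv), and ``for some $s\in S$'' in (iv)$\Rightarrow$(i). In the last case an $s$ depending on $b$ would not be a quasi $S$-$n$ element.

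Your hedge about $J^2$ should be settled definitively. With $J^2$ read as the product ideal, (iii) is false, and so is $s(I:b)^2\subseteq I$ in (iv). The elementwise reading is therefore the only one under which the proposition holds. It is also the reading the paper tacitly uses, since it passes from ``$sx^2\in I$ for every $x\in J$'' straight to $sJ^2\subseteq I$. Your aside that polarization settles the ring case is wrong, since $s(a+a')^2$ only yields $2saa'\in I$.

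Here is a counterexample. Let $A=\mathbb{F}_2[x,y]/(x^2,y^2)$, $R=A[t]$, $S=\{1\}$ and $I=(tx,ty)$.
\begin{itemize}
\item $N(R)=(x,y)R$ is prime, $I\subseteq N(R)$, and every element of $N(R)$ squares to $0$.
\item Hence if $ab\in I$ and $b\notin N(R)$, then $a\in N(R)$ and $a^2=0\in I$, so $I$ is a quasi $n$-ideal.
\item Take $J=(x,y)$ and $K=(t)$. Then $JK\subseteq I$ and $K\not\subseteq N(R)$.
\item But $xy\in J^2\setminus I$, because every element of $I$ lies in $tR$.
\end{itemize}
So your first attempt, extracting the mixed terms from the pair $(a+a',k)$, cannot succeed in general.
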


\begin{proof}

\noindent
$(i)\Rightarrow(ii)$.
Suppose that, $I$ is a quasi $S$-$n$-ideal. Take an element $a\in R$ and there exists $s\in S$ if $sa^{2}\in I $ then implies $s(a)\subseteq (I : a)$. Now if $sa^{2}\notin I$ then let $b\in (I:a)$ for some $b \in R$. Then $ab \in I$. Since $I$ is a quasi $S$-$n$-ideal of $R$ then  $ab \in I$  $\;\Rightarrow\; sa^{2}\in I \ \text{or}\ sb\in N(R)$ which implies $s(a) \subseteq (I:a)$ or $s(I:a) \subseteq N(R)$.

\medskip
\noindent $(ii)\Rightarrow(iii)$
Let $J$ and $K$ be an ideals of $R$ such that $JK\subseteq I$ and assume that  $sK \nsubseteq N(R)$, for all $s\in S$. Then there exists $k\in K\setminus N(R)$. For any $x\in J$, we have $xk\in JK\subseteq I$, and hence $k\in (I:x)$. Since $sk\notin N(R)$, we obtain $s(I:x)\nsubseteq N(R)$. By hypothesis $(ii)$, it follows that $s(x)\subseteq (I:x)$. Therefore $sx^{2}\in I$ for every $x\in J$ and hence $sJ^{2}\subseteq I$ or $sK \subseteq N(R)$.

\medskip
\noindent
$(iii)\Rightarrow(iv)$.
Let $sb \notin N(R)$, for all $s\in S$. Take $J=(I:b)$ and $K=(b)$. Since $b(I:b)\subseteq I$, we have $JK \subseteq I$. Because $sb \notin N(R)$, condition $(iii)$ implies $s(I:b)^{2} = sJ^{2} \subseteq I$ and hence $(iv)$ holds.

\medskip
\noindent
$(iv)\Rightarrow(i)$.
Let $a,b \in R$ such that $ab \in I$. Then $a \in (I:b)$. If $sb \in N(R)$, for some $s\in S$, then we are done. Otherwise, $sb \notin N(R)$ for all $s\in S$, then by (iv) we have $s(I:b)^{2} \subseteq I$, which implies $sa^{2} \in I$. Thus, $ab \in I  \implies  sa^{2} \in I \ \text{or} \ sb \in N(R)$, showing that $I$ is a quasi $S$-$n$-ideal of $R$.

Hence all four statements are equivalent. \end{proof}

\begin{Proposition}
Let $R$ be a commutative semiring and $S$ be a multiplicatively closed subset of $R$. Suppose that $I$ is a quasi $S$-$n$-ideal of $R$ and let $a\in R$ be such that $(a)=(a^{2})$. If $a\notin I$, then the ideal $(I:a)$ is a quasi $S$-$n$-ideal of $R$.
\end{Proposition}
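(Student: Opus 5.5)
The plan is to check the defining condition of Definition~\ref{2.2} for $(I:a)$ directly, keeping the same quasi $S$-$n$ element $s$ that works for $I$. The hypothesis $(a)=(a^2)$ lets us cancel a factor of $a$, and $a\notin I$ gives properness.

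First, $(I:a)$ is proper: if $1\in(I:a)$ then $a\in I$, contrary to hypothesis. Write $a=ra^{2}$ for some $r\in R$, which is possible since $a\in(a)=(a^2)$.

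For the main condition, let $x,y\in R$ with $xy\in(I:a)$. Then $(ax)y\in I$. Applying the quasi $S$-$n$ property of $I$ to the pair $(ax,y)$ gives two cases.

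In the first case $s(ax)^2=sa^{2}x^{2}\in I$. Multiplying by $r$ gives $s a x^{2}=r\,s a^{2}x^{2}\in I$, that is, $sx^{2}\in(I:a)$.

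In the second case $sy\in N(R)$, and we are done immediately.

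Hence $xy\in(I:a)$ implies $sx^2\in(I:a)$ or $sy\in N(R)$, with the same fixed $s$. This step is routine once the pair $(ax,y)$ is chosen. The choice matters: using $(x,ay)$ instead would only yield $say\in N(R)$, which is too weak.

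The main obstacle is the side condition $(I:a)\cap S=\emptyset$. Suppose $t\in S$ with $ta\in I$. Applying the quasi $S$-$n$ property to the product $a\cdot t\in I$ gives $sa^2\in I$ or $st\in N(R)$. The second alternative is impossible, since $st\in S$ and $0\notin S$ force every power $(st)^k\neq 0$. Thus $sa^2\in I$, and hence $sa=r\,sa^2\in I$.

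To conclude I would need $a\in I$, which requires cancelling $s$. This goes through when $S\subseteq Reg(R)$, or more precisely whenever $(I:s)=I$. In general the argument only gives $sa\in I$. I would therefore first try to derive a contradiction from $sa\in I$ together with $a\notin I$. If that fails, I would add the hypothesis $(I:a)\cap S=\emptyset$ (or $(I:s)=I$) explicitly, since the rest of the argument does not depend on it.
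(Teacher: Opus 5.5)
Your verification of the defining condition is exactly the paper's proof: apply the hypothesis on $I$ to $(ax)y\in I$, then multiply $sa^2x^2\in I$ by $r$, where $a=ra^2$. The paper never checks $(I:a)\cap S=\emptyset$, and your worry about this is justified. It cannot be derived, so there is no contradiction to be had from $sa\in I$ and $a\notin I$, and the statement as written is false.

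Take $R=\mathbb{Z}_6$, $S=\{1,3\}$ and $I=(0)$. Here $N(R)=\{0\}$. If $xy\equiv 0\pmod 6$ then $x$ or $y$ is even, so $3x^2=0$ or $3y=0$. Thus $I$ is a quasi $S$-$n$-ideal with $s=3$. For $a=2$ we have $a=2a^2$, so $(a)=(a^2)$, and $a\notin I$. Yet $(I:a)=\{0,3\}$ contains $3\in S$. This is exactly the situation your reduction isolates: $ta=0\in I$ with $t=3$, and $sa=0\in I$ while $a\notin I$.

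So the extra hypothesis is genuinely necessary. With $(I:a)\cap S=\emptyset$ assumed outright, or with $(I:s)=I$, your argument is complete.

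One correction concerns $S\subseteq Reg(R)$. Regularity lets you cancel $s$ in an equation $sx=0$, not in a membership $sx\in I$. So it does not work by giving $(I:s)=I$, and $(I:s)=I$ is not a sharper form of that hypothesis. The conclusion does still hold under $S\subseteq Reg(R)$, but the argument goes through nilpotency:
\begin{enumerate}
\item Proposition~\ref{2.3} gives $sI\subseteq N(R)$. Since the powers of $s$ are regular, this forces $I\subseteq N(R)$.
\item Now $ta\in I$ with $t\in S$ gives $t^ka^k=0$ for some $k$, hence $a^k=0$.
\item Iterating $a=ra^2$ yields $a=r^{k-1}a^k=0\in I$, a contradiction.
\end{enumerate}
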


\begin{proof}
Since $I$ is a quasi $S$-$n$-ideal of $R$, there exists $s\in S$ such that for all $x,y\in R$, whenever $xy\in I$, either $sx^{2}\in I$ or $sy\in N(R)$.
Let $x,y\in R$ be such that $xy\in (I:a)$ which implies $axy\in I$.
Applying the quasi $S$-$n$-ideal property of $I$ to $(ax)y\in I$, we obtain that either $s(ax)^{2}\in I $ or $sy\in N(R)$.
If $sy\in N(R)$, the desired condition holds. Suppose that this is not the case. Then $s a^{2}x^{2}\in I$. Since $(a)=(a^{2})$, there exists $r\in R$ such that $a=ra^{2}$. Multiplying by $r$, we get $sax^{2}\in I$. Hence $sx^{2}\in (I:a)$. Therefore, $xy\in (I:a) \Rightarrow sx^{2}\in (I:a) \ \text{or}\ sy\in N(R)$ and thus $(I:a)$ is a quasi $S$-$n$-ideal of $R$. \end{proof}

\begin{Proposition}
Let $f:R\to T$ be a homomorphism of commutative semirings and $S$ be a multiplicatively closed subset of $R$.Then the following statements hold:

(i) If $f$ is an epimorphism and $0 \notin f(S)$.  $ I$ is a quasi $S$-$n$-ideal of $R$ then $f(I)$ is a quasi $f(S)$-$n$-ideal of $T$.

(ii) If $f$ is monomorphisam and $I_0$ is a quasi $f(S)$-$n$-ideal of $T$, then $f^{-1}(I_0)$
is a quasi $S$-$n$-ideal of $R$.

\end{Proposition}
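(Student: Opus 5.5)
For (ii) I will work directly from Definition~\ref{2.2}. Let $I_0$ be a quasi $f(S)$-$n$-ideal of $T$ with quasi element $f(s)$, $s\in S$, and set $I=f^{-1}(I_0)$. Properness and disjointness are immediate. If $1\in I$ then $1=f(1)\in I_0$, which is impossible. If $t\in I\cap S$ then $f(t)\in I_0\cap f(S)=\phi$, which is also impossible.

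Now take $a,b\in R$ with $ab\in I$. Then $f(a)f(b)\in I_0$, so either $f(s)f(a)^2=f(sa^2)\in I_0$, giving $sa^2\in I$, or $f(sb)\in N(T)$. In the second case $f((sb)^m)=0=f(0)$ for some $m$. Injectivity of $f$ then gives $(sb)^m=0$, so $sb\in N(R)$. This is the only place where the monomorphism hypothesis is used. Hence $s$ is a quasi $S$-$n$ element of $I$.

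For (i) the natural plan is the following. Let $s$ be a quasi $S$-$n$ element of $I$, and take $f(s)$ as the candidate element. Given $x,y\in T$ with $xy\in f(I)$, I would use surjectivity to write $x=f(a)$ and $y=f(b)$, so that $f(ab)=f(c)$ for some $c\in I$. The difficulty is that in a semiring, $f(ab)=f(c)$ does not let us conclude $ab\in I$. There is no subtraction, so $ab-c\in\ker f$ makes no sense. One would need an extra hypothesis, such as $I$ being a $k$-ideal (subtractive) containing $\ker f$, or $I$ being $f$-saturated, i.e. $f^{-1}(f(I))=I$. I will state and use this hypothesis explicitly. With saturation, $ab\in f^{-1}(f(I))=I$, so $sa^2\in I$ or $sb\in N(R)$. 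Applying $f$ gives $f(s)x^2\in f(I)$ or $f(s)y\in N(T)$, since homomorphisms send nilpotents to nilpotents. Also $f(I)$ is an ideal because $f$ is onto.

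The remaining conditions in (i) need the same kind of care. We need $f(I)\cap f(S)=\phi$. If $f(t)=f(c)$ with $t\in S$ and $c\in I$, saturation gives $t\in I\cap S=\phi$, a contradiction. The hypothesis $0\notin f(S)$ guarantees that $f(S)$ is a genuine m.c.s. of $T$. Properness of $f(I)$ then follows from $1\in f(S)$.

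So the main obstacle, and the step where I expect the statement as written to need strengthening, is pulling back the relation $xy\in f(I)$ to $ab\in I$ in (i). I would first try to prove (i) assuming $\ker f\subseteq I$ together with $I$ a $k$-ideal, and verify that this implies saturation. If that does not go through, I would assume $I$ is $f$-saturated directly.
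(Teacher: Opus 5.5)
\textbf{Part (ii).} Your argument is the paper's: pull back $f(a)f(b)\in I_0$, apply the quasi $f(S)$-$n$ property with the element $f(s)$, and read off $sa^2\in f^{-1}(I_0)$ or $sb\in N(R)$. You are more careful than the paper in two places. The paper never checks $f^{-1}(I_0)\cap S=\phi$. It also never says that injectivity is what turns $f(sb)\in N(T)$ into $sb\in N(R)$.

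\textbf{Part (i).} Your diagnosis is right, and it pinpoints exactly where the paper's proof breaks. The paper writes ``$xy=f(ab)\in f(I)$, so $ab\in I$'', and this inference is unjustified. In fact (i) is false as stated. Let $f:\mathbb{Z}_0^+\to\mathbb{Z}_6^+$ be reduction modulo $6$, which is a surjective semiring homomorphism. Take $S=\{1\}$ and $I=(0)$. Then $I$ is a quasi $n$-ideal of $\mathbb{Z}_0^+$ by item (i) of the paper's first example, and $0\notin f(S)=\{1\}$. However, $f(I)=\{0\}$ is not a quasi $n$-ideal of $\mathbb{Z}_6^+$: we have $2\cdot 3=0$, while $2^2=4\neq 0$ and $3\notin N(\mathbb{Z}_6^+)=\{0\}$. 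So an extra hypothesis is unavoidable. Under $f^{-1}(f(I))=I$ your argument is complete and correct. It covers $f(I)$ being an ideal, $f(S)$ being an m.c.s., disjointness, properness, and the transfer of the defining condition.

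\textbf{Your intermediate fix.} Drop the plan of deriving saturation from $\ker f\subseteq I$ plus $I$ being a $k$-ideal; in semirings this implication fails. Let $\mathbb{B}=\{0,1\}$ with $1+1=1$, and define $f:\mathbb{Z}_0^+\to\mathbb{B}$ by $f(0)=0$ and $f(n)=1$ for $n\geq 1$. This is a surjective semiring homomorphism with $\ker f=\{0\}\subseteq I=(2)$, and $(2)$ is a $k$-ideal. But $f(2)=1=f(1)$, so $f^{-1}(f(I))=\mathbb{Z}_0^+\neq I$. The kernel route works only if you also assume the following about $f$: whenever $f(a)=f(b)$, there exist $k_1,k_2\in\ker f$ with $a+k_1=b+k_2$. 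Under that assumption, $f(a)=f(c)$ with $c\in I$ gives $a+k_1=c+k_2\in I$ with $k_1\in I$. Subtractivity of $I$ then yields $a\in I$, which is saturation. Otherwise, assume saturation directly, as in your fallback.
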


\begin{proof}
$(i)$ Assume that $f$ is surjective and $I$ is a quasi $S$-$n$-ideal of $R$.
Then there exists $s\in S$ such that for all $a,b\in R$, $ab\in I \Rightarrow sa^2\in I \ \text{or}\ sb\in N(R)$.
Let $x,y\in T$ such that $xy\in f(I)$. Since $f$ is surjective, there exist $a,b\in R$ with $f(a)=x$ and $f(b)=y$. Then
$xy=f(a)f(b)=f(ab)\in f(I)$, so $ab\in I$.
 By the quasi $S$-$n$ property of $I$, $sa^2\in I $ or $ sb\in N(R)$.
Then $f(sa^2)=f(s)f(a)^2\in f(I)$ or $ f(s)f(b)\in N(T)$.
Hence $f(s)x^2\in f(I)$ or $ f(s)y\in N(T)$, which shows that $f(I)$ is a quasi $f(S)$-$n$-ideal of $T$.

$(ii)$ Let $I_0$ be a quasi $f(S)$-$n$-ideal of $T$. Then there exists $s\in f(S)$ such that for all $x,y\in T$, $xy\in I_0 \Rightarrow sx^2\in I_0 \ \text{or}\ sy\in N(T)$.
Let $a,b\in R$ with $ab\in f^{-1}(I_0)$. Then $f(ab)=f(a)f(b)\in I_0$.
By the quasi $f(S)$-$n$ property of $I_0$,
$f(s)f(a)^2\in I_0 $ or $ f(s) f(b)\in N(T)$. Hence $f(sa^2)\in I_0 $ or $ f(sb)\in N(T)$. Thus $sa^2\in f^{-1}(I_0) $ or $ sb\in N(R)$, which proves that $f^{-1}(I_0)$ is a quasi $S$-$n$-ideal of $R$. \end{proof}

Let $I$ be a proper ideal of $R$. Let us define, $Z_I(R)=\{r\in R \mid rs\in I \text{ for some } s\in R\setminus I\}$, 
the set of zero-divisors of $R$ modulo $I$.
\begin{Theorem}\label{8}
Let $S \subseteq \operatorname{reg}(R)$ be a multiplicatively closed subset of a commutative semiring $R$, such that $N(S^{-1}R)=S^{-1}N(R)$ and let $I$ be a proper ideal of $R$.

If $I$ is a quasi $S$-$n$-ideal of $R$ iff  $S^{-1}I$ is a quasi $n$-ideal of $S^{-1}R$.

\end{Theorem}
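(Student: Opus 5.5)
Both directions will be proved straight from the definitions. We use the standard description of $S^{-1}R$: elements are $a/t$ with $a\in R$, $t\in S$, and since $S\subseteq \operatorname{reg}(R)$, an equality $a/t=b/u$ in $S^{-1}R$ means $ua=tb$ (in the semiring setting, possibly after multiplying by some $v\in S$, which is harmless because $v$ is regular). We also need $x/t\in S^{-1}I \iff vx\in I$ for some $v\in S$, and likewise $x/t\in N(S^{-1}R)=S^{-1}N(R)\iff vx\in N(R)$ for some $v\in S$. Here the hypothesis $N(S^{-1}R)=S^{-1}N(R)$ enters. In fact, because $S$ consists of regular elements, $vx\in N(R)$ already forces $x\in N(R)$, as in the proof of the earlier theorem characterizing quasi $S$-$n$-ideals as strongly quasi $S$-primary ideals with $\sqrt{I}=N(R)$.

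$(\Rightarrow)$ Let $s$ be a quasi $S$-$n$ element of $I$. First, $S^{-1}I$ is proper: if $1\in S^{-1}I$ then $v\in I$ for some $v\in S$, contradicting $I\cap S=\phi$. Now take $(a/t)(b/u)\in S^{-1}I$. Then $vab\in I$ for some $v\in S$. Apply the quasi $S$-$n$ condition to the product $a\cdot(vb)\in I$. This gives $sa^2\in I$ or $svb\in N(R)$.
\begin{itemize}
\item In the first case, $(a/t)^2=sa^2/(st^2)\in S^{-1}I$.
\item In the second case, $b/u=svb/(svu)\in S^{-1}N(R)=N(S^{-1}R)$.
\end{itemize}
Hence $S^{-1}I$ is a quasi $n$-ideal.

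$(\Leftarrow)$ Assume $S^{-1}I$ is a quasi $n$-ideal, so in particular it is proper. Let $ab\in I$. Then $(a/1)(b/1)\in S^{-1}I$, so either $a^2/1\in S^{-1}I$ or $b/1\in N(S^{-1}R)=S^{-1}N(R)$.
\begin{itemize}
\item In the second case, $vb\in N(R)$ for some $v\in S$.
\item In the first case, $v_a a^2\in I$ for some $v_a\in S$.
\end{itemize}
The obstacle is that Definition \ref{2.2} demands a \emph{single} $s\in S$ that works for all pairs $(a,b)$, whereas localization only produces witnesses $v_a$ that depend on $a$. For the nilradical branch this is harmless, since regularity gives $b\in N(R)$ and any $s$ works. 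For the $I$ branch, I would try the following route. First, $I\cap S=\phi$ should follow from properness of $S^{-1}I$, at least under the paper's implicit convention that $I$ is $S$-saturated. Next, I would pass to the ideal $(I:s)$ for a suitable $s$ and invoke the earlier proposition that, for $S\subseteq\operatorname{reg}(R)$, $I$ is a quasi $S$-$n$-ideal iff $(I:s)$ is a quasi $n$-ideal.

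If no uniform $s$ can be extracted in general, I would use the identity $S^{-1}I\cap R=\bigcup_{v\in S}(I:v)$ and argue that $I$ is a quasi $S$-$n$-ideal with $s=1$ relative to this saturation. Failing that, I would add a finiteness hypothesis, for instance that $S$ is finite, as in the paper's examples where $S=\{1,3,9\}$, and take $s$ to be the product of all elements of $S$. This uniformity step is the one I expect to be genuinely delicate, and it is likely where the paper's argument silently chooses a common $s$.
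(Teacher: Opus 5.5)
Your $(\Rightarrow)$ direction is correct and is essentially the paper's argument: you apply the condition to $a\cdot(vb)\in I$, the paper to $(ta)\cdot b\in I$. Your aside that regularity lets you cancel a factor $v$ is not valid in semirings, but nothing depends on it.

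The genuine gap is $(\Leftarrow)$: you never produce a fixed $s$, and none of your fallback routes proves the statement as written.
Invoking the $(I:s)$ criterion is circular, since finding an $s$ with $(I:s)$ a quasi $n$-ideal is exactly the uniformity problem. The ``$s=1$ relative to the saturation'' route is a statement about $\bigcup_{v\in S}(I:v)$, not about $I$. Assuming $S$ finite changes the hypotheses. Also, $I\cap S=\phi$ needs no saturation convention: if $t\in I\cap S$ then $1/1=t/t\in S^{-1}I$.

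The obstruction you anticipated is real and cannot be argued away. The paper's proof simply sets $s=s_1s_2$ with $s_1$ depending on $a$ and $s_2$ on $b$, and the implication is in fact false. Here is a counterexample.
\begin{itemize}
\item Setup: let $k$ be a field, $R=k[x,y_1,y_2,\dots]/(y_i^3,\ y_iy_j\ (i\neq j))$, $S=\{x^n:n\ge 0\}$ and $I=(x^iy_i: i\ge 1)$.
\item Hypotheses hold: $R$ is free over $k[x]$ on $1,y_i,y_i^2$, so $S\subseteq\operatorname{reg}(R)$. Also $N(R)=(y_1,y_2,\dots)$ with $R/N(R)\cong k[x]$, and $N(S^{-1}R)=S^{-1}N(R)$ holds (it always does).
\item $I$ is admissible: $I=\bigoplus_i\bigl(x^ik[x]\,y_i\oplus x^ik[x]\,y_i^2\bigr)\subseteq N(R)$, so $I$ is proper and $I\cap S=\phi$.
\item $S^{-1}I$ is a quasi $n$-ideal: since $I\subseteq N(R)$ and $y_i/1=x^iy_i/x^i\in S^{-1}I$, we get $S^{-1}I=N(S^{-1}R)$. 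This ideal is prime, because the quotient is $k[x,x^{-1}]$. Hence $S^{-1}I$ is an $n$-ideal, and so a quasi $n$-ideal.
\item $I$ is not a quasi $S$-$n$-ideal: for any candidate $s=x^m$, take $a=y_{m+1}$, $b=x^{m+1}$. Then $ab\in I$, while $sa^2=x^my_{m+1}^2\notin I$ and $sb=x^{2m+1}\notin N(R)$.
\end{itemize}

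So $(\Leftarrow)$ needs an additional hypothesis, and the natural one lets your argument finish verbatim. Suppose $\bigcup_{v\in S}(I:v)=(I:s_0)$ for some $s_0\in S$. This is automatic when $S$ is finite, or when $R$ is Noetherian, since this directed union is then finitely generated. Then $a^2/1\in S^{-1}I$ gives $s_0a^2\in I$. Meanwhile $b/1\in N(S^{-1}R)$ gives $vb\in N(R)$ for some $v\in S$, hence $b\in N(R)$ by regularity. So $s_0$ is a quasi $S$-$n$ element of $I$.
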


\begin{proof}
 Since $I$ is a quasi $S$-$n$-ideal of $R$, there exists $s\in S$ such that for all $a,b\in R$, $ab\in I \Longrightarrow sa^{2}\in I \quad\text{or}\quad
sb\in N(R)$. Let $\frac{a}{u},\frac{b}{v}\in S^{-1}R$ be such that $ \frac{a}{u}\cdot\frac{b}{v}\in S^{-1}I$. Then there exists $t\in S$ satisfying $ tab\in I$. Since $I$ is a quasi $S$-$n$-ideal, we have $s(ta)^2=s t^2 a^2\in I$ or $sb\in N(R)$. Then $\frac{a^2}{u^2} = \frac{s t^2 a^2}{s t^2 u^2} \in S^{-1}I$ and $\frac{b}{v}=\frac{sb}{sv}\in S^{-1}N(R)$. Hence $\frac{a}{u}\frac{b}{v}\in S^{-1}I$ implies $\left(\frac{a}{u}\right)^2\in S^{-1}I \quad\text{or}\quad \frac{b}{v}\in N(S^{-1}R)$.
Therefore $S^{-1}I$ is a quasi $n$-ideal of $S^{-1}R$.

Conversely assume that $S^{-1}I$ is a quasi $n$-ideal of $S^{-1}R$. Let $a,b\in R$ with $ab\in I$. Then $ \frac{a}{1}\cdot\frac{b}{1}\in S^{-1}I$. Since $S^{-1}I$ is a quasi $n$-ideal of $S^{-1}R$, $ \left(\frac{a}{1}\right)^2\in S^{-1}I \quad\text{or}\quad \frac{b}{1}\in N(S^{-1}R)$. If $\left(\frac{a}{1}\right)^2\in S^{-1}I$, then there exists $s_1\in S$ such that $s_1a^{2}\in I$. If $ \frac{b}{1}\in N(S^{-1}R)$, then $ \frac{b}{1}\in S^{-1}N(R)$, and hence there exists $s_2\in S$ such that $ s_2b\in N(R)$. Let $s=s_1s_2\in S$. Then $ ab \in I$ implies $ sa^{2}\in I \quad\text{or}\quad sb\in N(R)$. Therefore $I$ is a quasi $S$-$n$-ideal of $R$.
\end{proof}
\begin{Corollary}
If $S^{-1}I$ is a quasi $n$-ideal of $S^{-1}R$ and $S\cap Z_I(R)=\phi=S\cap Z_{N(R)}(R)$, then $I$ is a quasi $n$-ideal of $R$.
\end{Corollary}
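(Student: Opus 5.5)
The plan is to argue directly from the definitions, with Theorem \ref{8} only as a guide. The corollary has no hypothesis $S\subseteq Reg(R)$ and no hypothesis $N(S^{-1}R)=S^{-1}N(R)$, so I would not apply Theorem \ref{8} verbatim. Instead I would pull back the quasi $n$-ideal condition from $S^{-1}R$ to $R$ and use the two disjointness conditions to remove the elements of $S$ that appear.

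Let $a,b\in R$ with $ab\in I$. Then $\frac{a}{1}\cdot\frac{b}{1}\in S^{-1}I$. Since $S^{-1}I$ is a quasi $n$-ideal, either $\left(\frac{a}{1}\right)^2\in S^{-1}I$ or $\frac{b}{1}\in N(S^{-1}R)$.

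\emph{First case.} Here $\frac{a^2}{1}=\frac{x}{u}$ with $x\in I$ and $u\in S$. In the semiring localization this gives some $t\in S$ with $tua^2=tx\in I$. So $ta^2\in I$ for some $t\in S$ (absorbing $u$ into $t$). Suppose $a^2\notin I$. Then $t$ multiplies an element of $R\setminus I$ into $I$, so $t\in Z_I(R)$. This contradicts $S\cap Z_I(R)=\phi$, hence $a^2\in I$.

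\emph{Second case.} Here $\left(\frac{b}{1}\right)^m=0$ in $S^{-1}R$ for some $m$. By the definition of equality of fractions, $tb^m=0$ for some $t\in S$. Since $0\in N(R)$, we have $tb^m\in N(R)$. If $b^m\notin N(R)$, then $t\in Z_{N(R)}(R)$, which contradicts $S\cap Z_{N(R)}(R)=\phi$. Therefore $b^m\in N(R)$, so $b\in N(R)$. This case uses only the definition of equality in $S^{-1}R$ together with the hypothesis on $Z_{N(R)}(R)$, so no hypothesis $N(S^{-1}R)=S^{-1}N(R)$ is needed.

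Finally, $I$ must be proper. Since $S^{-1}I$ is a quasi $n$-ideal, it is proper, so $1\notin S^{-1}I$ and hence $1\notin I$. The two cases together give $ab\in I\Rightarrow a^2\in I$ or $b\in N(R)$, so $I$ is a quasi $n$-ideal of $R$.

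The main obstacle I expect is handling the localization correctly in a semiring. Equality $\frac{x}{u}=\frac{y}{v}$ means $txv=tyu$ for some $t\in S$; there is no subtraction, so the standard ring argument does not transfer. I must check that $\frac{a^2}{1}\in S^{-1}I$ really produces some $t\in S$ with $ta^2\in I$. This requires $I$ to be compatible with that equality: if $tua^2=tx$ with $x\in I$, then $tua^2\in I$ directly, so this works for any ideal. With that checked, the rest is routine.
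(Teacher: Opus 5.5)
Your proof is correct and complete as written. The paper gives no separate proof. The corollary sits directly after Theorem~\ref{8}, so the intended derivation is to use the converse direction of that theorem to get a quasi $S$-$n$ element $s$. One then cancels $s$ from $sa^{2}\in I$ and $sb\in N(R)$ using $s\notin Z_I(R)$ and $s\notin Z_{N(R)}(R)$.

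You instead work directly in $S^{-1}R$ and cancel the witness $t\in S$ separately for each pair $(a,b)$. That gains you two things.

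First, you avoid the standing hypotheses of Theorem~\ref{8}. The condition $N(S^{-1}R)=S^{-1}N(R)$ actually holds automatically for the usual semiring localization, by the same computation as in your second case. But $S\subseteq\operatorname{reg}(R)$ is a real restriction, and the corollary's disjointness conditions do not imply it. So your argument covers strictly more cases.

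Second, and more importantly, you never need one element of $S$ that works for all pairs. The paper's proof of the converse of Theorem~\ref{8} sets $s=s_1s_2$ with $s_1,s_2$ chosen depending on $a$ and $b$. So it does not actually produce the fixed element required in Definition~\ref{2.2}. Your pointwise elimination makes that issue irrelevant here. The route through Theorem~\ref{8} is shorter on the page, but it inherits both the extra hypothesis and that gap.
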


\begin{Definition} \cite{go}
Let $R$ be a commutative semiring, and let $I$ be a $Q$-ideal of $R$. Then there exists a subset $Q\subseteq R$ such that
$R=\displaystyle\bigcup_{q\in Q}(q+I)$ and for any $q_1,q_2\in Q$, $(q_1+I)\cap(q_2+I)\neq\phi
\Longleftrightarrow q_1=q_2$. The collection $R/I(Q)=\{q+I: q\in Q\}$
is called the quotient semiring of $R$ with respect to $I$. The operations on $R/I(Q)$ are defined by $(q_1+I)\oplus(q_2+I)=q_3+I$,
where $q_3\in Q$ is the unique element satisfying $q_1+q_2+I\subseteq q_3+I$
and $(q_1+I)\odot(q_2+I)=q_4+I$,
where $q_4\in Q$ is the unique element satisfying $q_1q_2+I\subseteq q_4+I$.
Then $(R/I(Q),\oplus,\odot)$ is the quotient semiring of $R$ by the $Q$-ideal $I$. Furthermore, if $q_0\in Q$ is the unique element satisfying
$q_0+I=I$, then $q_0+I$ is the zero element of $R/I(Q)$.

\end{Definition}
Let $R$ be a commutative semiring and consider the polynomial semiring
$R[x]$. Let $I=\langle x^m\rangle$
be a $Q$-ideal of $R[x]$. Then a corresponding partitioning set is
$$Q(x)=\left\{ a_0+a_1x+a_2x^2+ \cdots+a_{m-1}x^{m-1}  \mid a_i\in R \right\}.$$
Thus, every element of the quotient semiring $R[x]/\langle x^m\rangle(Q)$
has a unique representative in $Q(x)$. Then the quotient polynomial semiring is $R[x]/\langle x^m\rangle$ and let $\overline{S}=\{\,s+\langle x^m\rangle \mid s\in S\,\}\subseteq R[x]/\langle x^m\rangle$.
For any $s+\langle x^m\rangle,\; t+\langle x^m\rangle \in \overline{S}$ where $s,t \in S$, we have $(s+\langle x^m\rangle)(t+\langle x^m\rangle)=q+\langle x^m\rangle$, where $q \in Q(x)$ is the unique element satisfying $st+ I\subseteq q+I$. Since $s,t \in R$, their product $st$ is a constant polynomial in $R[x]$. Hence $st \in Q(x)$ and by uniqueness of $q$, we have $q=st$. Therefore $(s+\langle x^m\rangle)(t+\langle x^m\rangle)=st+\langle x^m\rangle$.
Since $S$ is multiplicatively closed set of $R$, so $st\in S$. Therefore, $st+\langle x^m\rangle \in \overline{S}$, which shows that $\overline{S}$ is multiplicatively closed in $R[x]/\langle x^m\rangle$. In the following theorem, we investigate the property of quasi $S$-$n$-ideal in the quotient polynomial semiring $R[x]/\langle x^m\rangle$.

\begin{Lemma}\label{11}
    For the quotient polynomial semiring $R[x]/\langle x^m\rangle$, the nilradical $N\!\left(R[x]/\langle x^m\rangle\right)=N(R)+Rx+\cdots+Rx^{m-1}+\langle x^m\rangle$.
\end{Lemma}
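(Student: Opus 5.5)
The plan is to prove the two inclusions directly, computing with the unique representatives in $Q(x)$. The first step is to make the quotient operations concrete. Let $f\in R[x]$, and write $f=f_{<m}+f_{\ge m}$, where $f_{<m}\in Q(x)$ collects the terms of degree $<m$ and $f_{\ge m}\in\langle x^m\rangle$. Then $f+\langle x^m\rangle\subseteq f_{<m}+\langle x^m\rangle$. By the uniqueness clause in the definition of $R/I(Q)$, the class of $f$ is therefore represented by its truncation $f_{<m}$.

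Applying this to products, $(q_1+\langle x^m\rangle)\odot(q_2+\langle x^m\rangle)$ is represented by the truncation of $q_1q_2$ below degree $m$. Likewise, $0\in Q(x)$ and $0+\langle x^m\rangle=\langle x^m\rangle$, so $q_0=0$: the zero class is represented by the zero polynomial. This is the step I expect to need the most care, because the $Q$-ideal formalism does not let us subtract. Everything must be phrased through the containments $f+I\subseteq q+I$ together with uniqueness of $q$, exactly as the paper did for $\overline{S}$.

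\emph{Inclusion $\subseteq$.} Let $\bar f$ be nilpotent, with representative $f=a_0+a_1x+\cdots+a_{m-1}x^{m-1}$, and suppose $\bar f^{\,k}=\bar 0$. By the first step, $\bar f^{\,k}$ is represented by the truncation of $f^k$. The constant term of $f^k$ is $a_0^k$, and truncation does not change the constant term since $m\ge 1$. Since the representative of $\bar 0$ is $0$, uniqueness of representatives gives $a_0^k=0$. Hence $a_0\in N(R)$, and $\bar f$ lies in $N(R)+Rx+\cdots+Rx^{m-1}+\langle x^m\rangle$.

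\emph{Inclusion $\supseteq$.} Take $f=a_0+g$, where $a_0\in N(R)$ with $a_0^p=0$ and $g=a_1x+\cdots+a_{m-1}x^{m-1}$. Every monomial in $g^m$ has degree at least $m$, so $g^m\in\langle x^m\rangle$ and $\bar g^{\,m}=\bar 0$. Also $\bar a_0^{\,p}=\overline{a_0^p}=\bar 0$. In a commutative semiring the binomial expansion $(u+v)^{p+m}=\sum\binom{p+m}{i}u^iv^{p+m-i}$ holds with natural-number coefficients and needs no subtraction. In each term either $i\ge p$ or $p+m-i\ge m$, so every term vanishes and $(\bar a_0+\bar g)^{p+m}=\bar 0$. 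Thus $\bar f\in N(R[x]/\langle x^m\rangle)$, which proves the equality.
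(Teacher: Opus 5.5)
Your proof is correct. For the inclusion $\subseteq$ it is the paper's argument: the representative in $Q(x)$ of $\bar f^{\,k}=\bar 0$ must be the zero polynomial by uniqueness, so its constant coefficient $a_0^k$ vanishes.

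One step is implicit, and the paper skips it too: that $\bar f^{\,k}$ is represented by the truncation of $f^k$. This is a one-line induction, since $f^k=(f^{k-1})_{<m}f+(f^{k-1})_{\ge m}f$ and the second summand lies in $\langle x^m\rangle$. For your purpose it is enough to note that the constant term of the truncation of $q_1q_2$ is the product of the constant terms.

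You genuinely differ from the paper on the inclusion $\supseteq$. The paper simply asserts that, because $b_0^k=0$, some power $g^r$ equals $0+\langle x^m\rangle$. It then restates this via uniqueness of representatives, so the key fact is never justified. Your binomial expansion of $(\bar a_0+\bar g)^{p+m}$ uses only commutativity and distributivity, with no subtraction, together with $\bar a_0^{\,p}=\bar 0$ and $\bar g^{\,m}=\bar 0$. It supplies exactly the missing argument and gives an explicit nilpotency index; in fact $p+m-1$ already suffices.

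So your route matches the paper's outline but is more complete.
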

\begin{proof}


 Let $f=a_0+a_1x+a_2x^2+\cdots+a_{m-1}x^{m-1}+\langle x^m\rangle \in N\!\left(R[x]/\langle x^m\rangle\right)$. Then there exists $ n\in \mathbb{N}$ such that $f^n=0+\langle x^m\rangle $, also $f^n= q_0+\langle x^m\rangle $ where $q_0 \in Q(x)$ is the unique element satisfying $(\sum_{i=0}^{m-1}a_i x^i)^n +\langle x^m\rangle \subseteq q_0+\langle x^m\rangle=0+\langle x^m\rangle$. By the uniqueness of $q_0$, $q_0 = 0$ the zero polynomial in $Q(x)$. Therefore its constant coefficient is $0$, which gives $a_0^n=0$ and therefore $a_0 \in N(R)$, thus $f \in N(R)+Rx+\cdots+Rx^{m-1}+\langle x^m\rangle$. 
 
 Conversely, let $g=b_0+b_1x+b_2x^2+\cdots+b_{m-1}x^{m-1}+\langle x^m\rangle  \in N(R)+Rx+\cdots+Rx^{m-1}+\langle x^m\rangle$ then $b_0 \in N(R) $. Since $b_0^k=0$ for some $k \in \mathbb{N}$, there exists  $r \in \mathbb{N}$ such that $g^r=0+\langle x^m\rangle $, and also $g^r=q_0'+\langle x^m\rangle$  where $q_0' \in Q(x)$ is the unique element satisfying $(\sum_{i=0}^{m-1}b_i x^i)^r+ \langle x^m\rangle \subseteq q_0'+\langle x^m\rangle =0+\langle x^m\rangle$. By the uniqueness of $q_0'$, $q_0' = 0$, the zero polynomial in $Q(x)$. Therefore $g \in N\!\left(R[x]/\langle x^m\rangle\right)$.  Hence for the quotient polynomial semiring $R[x]/\langle x^m\rangle$, the nilradical $N\!\left(R[x]/\langle x^m\rangle\right)=N(R)+Rx+\cdots+Rx^{m-1}+\langle x^m\rangle$.
 \end{proof}
\begin{Theorem}\label{12}
Let $R$ be a commutative semiring, $S$ be a multiplicatively closed subset of $R$, and $I'$ a proper ideal of $R$ disjoint from $S$ and take $\overline{S}=\{\,s+\langle x^m\rangle \mid s\in S\,\}$. Then $J=I'+Rx+Rx^2+\cdots+Rx^{m-1}+\langle x^m\rangle$ be an ideal of $R[x]/\langle x^m\rangle$ then $\overline{S} \cap J= \phi$. Then $J$ is a quasi $\overline{S}$-$n$-ideal of $R[x]/\langle x^m\rangle$ if and only if $I'$ is a quasi $S$-$n$-ideal of $R$.
\end{Theorem}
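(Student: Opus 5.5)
The plan is to identify $J$ concretely and then compare the quasi condition in both directions, using Lemma \ref{11} to handle nilpotency in $R[x]/\langle x^m\rangle$. Every element $f$ of $R[x]/\langle x^m\rangle$ has a unique representative $a_0+a_1x+\cdots+a_{m-1}x^{m-1}$ in $Q(x)$, and we write $c(f)=a_0$ for its constant term. The map $f\mapsto c(f)$ respects addition and multiplication: the constant term of a product or sum of polynomials depends only on the constant terms, and reduction modulo $\langle x^m\rangle$ does not change it. Hence
\[
f\in J \iff c(f)\in I', \qquad f\in N\!\left(R[x]/\langle x^m\rangle\right)\iff c(f)\in N(R),
\]
the second equivalence being exactly Lemma \ref{11}. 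The constant term of $s+\langle x^m\rangle\in\overline{S}$ is $s$, so $\overline{S}\cap J=\phi$ follows from $I'\cap S=\phi$. Properness of $J$ follows from properness of $I'$, since $1\in J$ would force $1\in I'$.

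\textbf{($\Leftarrow$)} Assume $I'$ is a quasi $S$-$n$-ideal with quasi $S$-$n$ element $s$, and take $\bar s=s+\langle x^m\rangle$. Let $f,g$ satisfy $fg\in J$. Then $c(f)c(g)=c(fg)\in I'$, so either $sc(f)^2\in I'$ or $sc(g)\in N(R)$. Since $c(\bar s f^2)=sc(f)^2$ and $c(\bar s g)=sc(g)$, the two displayed equivalences give $\bar s f^2\in J$ or $\bar s g\in N(R[x]/\langle x^m\rangle)$.

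\textbf{($\Rightarrow$)} Assume $J$ is a quasi $\overline{S}$-$n$-ideal with element $\bar s=s+\langle x^m\rangle$. Take $a,b\in R$ with $ab\in I'$ and view them as constant classes $a+\langle x^m\rangle$ and $b+\langle x^m\rangle$. Their product has constant term $ab\in I'$, so it lies in $J$. Hence $\bar s a^2\in J$ or $\bar s b\in N(R[x]/\langle x^m\rangle)$. Reading off constant terms gives $sa^2\in I'$ or $sb\in N(R)$, so $I'$ is a quasi $S$-$n$-ideal with the same $s$.

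The main obstacle is justifying that the constant term map is multiplicative on the $Q$-ideal quotient. This requires checking that the unique $q\in Q(x)$ with $fg+\langle x^m\rangle\subseteq q+\langle x^m\rangle$ has constant term $c(f)c(g)$. I would argue this from the uniqueness property, in the same way the paper showed $(s+\langle x^m\rangle)(t+\langle x^m\rangle)=st+\langle x^m\rangle$: adding elements of $\langle x^m\rangle$ only affects coefficients of degree at least $m$, and $Q(x)$ is exactly the truncations below degree $m$.
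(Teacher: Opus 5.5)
Your proposal is correct and follows essentially the same route as the paper. Both arguments reduce membership in $J$, and (via Lemma \ref{11}) membership in $N(R[x]/\langle x^m\rangle)$, to a condition on the constant coefficient, then transfer the quasi condition in both directions with the same element $s$. Your explicit check that the constant-term map is multiplicative on the $Q$-quotient, and that $J$ is proper and disjoint from $\overline{S}$, fills in details the paper leaves implicit.
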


\begin{proof}
Suppose that $I'$ is a quasi $S$-$n$-ideal of $R$. Let $f=a_0+a_1x+a_2x^2+ \cdots +a_{m-1}x^{m-1}+\langle x^m\rangle \quad\text{and}\quad g=b_0+b_1x+b_2x^2+ \cdots+b_{m-1}x^{m-1}+\langle x^m\rangle$, where $a_i,b_i \in R$, $0 \le i \le m-1$, be elements of $R[x]/\langle x^m\rangle$ such that $fg\in J$. Then the constant coefficient $a_0b_0\in I'$. Since $I'$ is a quasi $S$-$n$-ideal, there exists $s\in S$ such that $sa_0^2\in I' $ or $sb_0\in N(R)$. If $sa_0^2\in I'$, then the constant coefficient of $(s+\langle x^m\rangle)f^2$ belongs to $I'$. Hence $(s+\langle x^m\rangle)f^2\in J$. If $sb_0\in N(R)$, then $(s+\langle x^m\rangle)g \in N\!\left(R[x]/\langle x^m\rangle\right)$ by Lemma \ref{11}, since its constant coefficient $sb_0$ is nilpotent. Therefore $J$ is a quasi $\overline{S}$-$n$-ideal of $R[x]/\langle x^m\rangle$.

Conversely, assume that $J$ is a quasi $\overline{S}$-$n$-ideal of $R[x]/\langle x^m\rangle$. Let $a,b\in R$ with $ab\in I'$. Then $(a+\langle x^m\rangle)(b+\langle x^m\rangle)\in J$. Hence there exists $s\in S$ such that $(s+\langle x^m\rangle)(a+\langle x^m\rangle)^2\in J$ or $(s+\langle x^m\rangle)(b+\langle x^m\rangle) \in N\!\left(R[x]/\langle x^m\rangle\right)$. Therefore the constant coefficients $sa^2\in I'$
 or $sb\in N(R)$ by Lemma \ref{11}, showing that $I'$ is a quasi $S$-$n$-ideal of $R$.
\end{proof}

\begin{Corollary}
    If $S=\{1\}$, then $J$ is a quasi $n$-ideal of $R[x]/\langle x^m\rangle$ if and only if $I'$ is a quasi $n$-ideal of $R$.
\end{Corollary}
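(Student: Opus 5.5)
The corollary is the special case $S=\{1\}$ of Theorem \ref{12}, so I will specialize that theorem and check that each notion in it reduces to the one in the corollary.

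First, with $S=\{1\}$ we get $\overline{S}=\{1+\langle x^m\rangle\}$, which is the multiplicatively closed set consisting only of the identity of $R[x]/\langle x^m\rangle$. Next, I check the disjointness hypotheses of Theorem \ref{12}. Since $I'$ is proper, $1\notin I'$, so $I'\cap S=\phi$. The constant term of every element of $J$ lies in $I'$, and the constant term of $1+\langle x^m\rangle$ is $1\notin I'$. Hence $\overline{S}\cap J=\phi$, and $J$ is a proper ideal.

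Then I note that when the multiplicatively closed set is $\{1\}$, the only possible choice of element is $s=1$. The quasi $S$-$n$ condition then says: $ab\in I$ implies $a^2\in I$ or $b\in N(R)$. This is exactly the quasi $n$-ideal condition, as the paper already remarked after Definition \ref{2.2}. The same reduction applies to $J$ with respect to $\overline{S}$ in $R[x]/\langle x^m\rangle$. So the two sides of the equivalence in Theorem \ref{12} become ``$J$ is a quasi $n$-ideal'' and ``$I'$ is a quasi $n$-ideal'', which is the claim.

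There is no real obstacle here. The only point to verify is that multiplication by $1+\langle x^m\rangle$ acts as the identity in the quotient semiring. This follows from the multiplication rule computed before Lemma \ref{11}: since $1\cdot q=q\in Q(x)$, uniqueness of the representative gives $(1+\langle x^m\rangle)(q+\langle x^m\rangle)=q+\langle x^m\rangle$.
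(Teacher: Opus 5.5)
Your proposal is correct and matches the paper: the corollary is Theorem \ref{12} specialized to $S=\{1\}$, where $\overline{S}=\{1+\langle x^m\rangle\}$ and both the quasi $\overline{S}$-$n$ and the quasi $S$-$n$ conditions collapse to the quasi $n$-ideal conditions. Your checks that $\overline{S}\cap J=\phi$ and that $1+\langle x^m\rangle$ acts as the identity are the routine details the paper leaves implicit.
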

\begin{Example}
    
Let $R=\mathbb{Z}_{12}^+,\quad I'=\langle 4\rangle=\{0,4,8\}$, and let $S=\{3^n:n\geq0\}$. Then $I'\cap S=\phi$ and $N(R)=\langle6\rangle=\{0,6\}$. Choosing $s=3\in S$, now whenever $ab\in I'$, either $3a^{2}\in I' \quad\text{or}\quad 3b\in N(R)$. Hence $I'$ is a quasi $S$-$n$-ideal of $R$. Now let $I=\langle x^{3}\rangle$. Then $R[x]/\langle x^{3}\rangle$ is the quotient polynomial semiring, with partitioning set $Q=\{ a_0+a_1x+a_2x^2\}$ and $J=I'+Rx+Rx^{2}+\langle x^{3}\rangle =\langle4\rangle+Rx+Rx^{2}+\langle x^{3}\rangle$. Also, $\overline{S} =\{\,3^{n}+\langle x^{3}\rangle:n\geq0\,\}$.

By the Theorem  \ref{12}, $J$ is a quasi $\overline{S}$-$n$-ideal of $R[x]/\langle x^{3}\rangle$.

\end{Example}

\section{Idealizations and Amalgamations}\label{2}



Let $R$ be a commutative semiring and $M$ an $R$-semimodule. The idealization of $M$ over $R$, denoted by $R(+)M$, is the semiring whose underlying set is $R\times M$, with componentwise addition and multiplication defined by
\[
(r_1,m_1)(r_2,m_2)=(r_1r_2,r_1m_2+r_2m_1).
\]
The idealization construction and its properties in the context of $S$-$k$-primary ideals of semirings were studied by Bhowmick and Goswami \cite{bs}.

Suppose that $I$ is an ideal of $R$ and $N$ is a subsemimodule of $M$. Then $I(+)N=\{(i,n)\mid i\in I, n\in N\}$ is an ideal of $R(+)M$ if $IM\subseteq N$. Under this condition, its radical is given by $\sqrt{I(+)N}=\sqrt{I}(+)M.$
As a consequence, we have $N(R(+)M)=N(R)(+)M$.

Moreover, if $S$ is a multiplicatively closed subset of $R$, then $S(+)M=\{(s,m)\mid s\in S,\ m\in M\}$ and $S(+)\{0\}=\{(s,0)\mid s\in S\}$
are multiplicatively closed subsets of $R(+)M$.

In this section, we investigate the relationship between quasi $S$-$n$-ideals of $R$ and quasi $S(+)M$-$n$-ideals of the idealization $R(+)M$. In particular, we establish results describing how the quasi $S$-$n$-ideal property is transferred between a semiring and its idealization.

\begin{Proposition} \label{3.1}
 Let $R$ be a commutative semiring and $N$ be a subsemimodule of $M$. Let $S$ be a multiplicatively closed subset of $R$ and $I$ be an ideal of $R$ such that $I \cap S = \phi$.

 $(i)$ If $I(+)N$ is a quasi $S(+)M$-$n$-ideal of $R(+)M$, then $I$ is a quasi $S$-$n$-ideal of $R$.

 $(ii)$ Suppose that $N(R)M\subseteq N$. If $I$ is a quasi $S$-$n$-ideal of $R$, then $I(+)N$ is a quasi $S(+)M$-$n$-ideal of $R(+)M$.
\end{Proposition}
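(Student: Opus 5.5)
For both parts we work directly from the definition of a quasi $S(+)M$-$n$-ideal, using the paper's identity $N(R(+)M)=N(R)(+)M$ and the multiplication rule $(r_1,m_1)(r_2,m_2)=(r_1r_2,r_1m_2+r_2m_1)$. We also assume the implicit hypothesis $IM\subseteq N$, so that $I(+)N$ is an ideal. Properness and disjointness transfer easily. $I(+)N$ is proper if and only if $I$ is proper. Also $(I(+)N)\cap(S(+)M)=\phi$ if and only if $I\cap S=\phi$, since a common element $(s,m)$ would need $s\in I\cap S$ (and conversely an $s\in I\cap S$ would give $(s,0)$ in both sets, using $0\in N$).

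For $(i)$, let $(s,m)\in S(+)M$ be a quasi $S(+)M$-$n$ element of $I(+)N$. We claim $s$ is a quasi $S$-$n$ element of $I$. Take $a,b\in R$ with $ab\in I$. Then $(a,0)(b,0)=(ab,0)\in I(+)N$. By hypothesis, either $(s,m)(a,0)^2=(sa^2,a^2m)\in I(+)N$ or $(s,m)(b,0)=(sb,bm)\in N(R)(+)M$. Reading off first coordinates gives $sa^2\in I$ or $sb\in N(R)$, as required.

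For $(ii)$, let $s$ be a quasi $S$-$n$ element of $I$ and use $(s,0)\in S(+)M$ as the candidate element. Let $(a,m_1)(b,m_2)=(ab,am_2+bm_1)\in I(+)N$, so $ab\in I$. If $sb\in N(R)$, then $(s,0)(b,m_2)=(sb,sm_2)\in N(R)(+)M=N(R(+)M)$, and we are done. Otherwise $sa^2\in I$. We then need
\[
(s,0)(a,m_1)^2=(sa^2,\,2sam_1)\in I(+)N,
\]
which amounts to showing $2sam_1\in N$. This is the main obstacle, since $a$ itself need not lie in $I$.

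The first thing to try is to observe that $sa^2\in I\subseteq\sqrt I$ gives $sa\in\sqrt I$, because $(sa)^2=s\cdot sa^2$. By Proposition \ref{2.3}, after replacing $s$ by the product of $s$ with the element supplied there (a product of such elements is still a valid quasi $S$-$n$ element), we may assume $s\sqrt I\subseteq N(R)$. Then $s\cdot sa\in N(R)$, so $s^2a\in N(R)$ and hence $s^2am_1\in N(R)M\subseteq N$. This is exactly where the hypothesis $N(R)M\subseteq N$ enters. To make it suffice, take the candidate element to be $(s^2,0)$, or a suitable power of $s$, from the start. The second-case computation still works, since $s^2b\in N(R)$ whenever $sb\in N(R)$. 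In the first case we get $(s^2,0)(a,m_1)^2=(s^2a^2,2s^2am_1)$, where $s^2a^2\in I$ and $2s^2am_1\in N$. Hence $I(+)N$ is a quasi $S(+)M$-$n$-ideal, and the fact that $(s^2,0)\in S(+)\{0\}\subseteq S(+)M$ completes the argument.
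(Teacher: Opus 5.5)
Your proposal is correct and follows the paper's proof. Part (i) is identical. In part (ii) you use the same ingredients as the paper, namely $s\sqrt{I}\subseteq N(R)$ from Proposition \ref{2.3} together with $N(R)M\subseteq N$, to place $2sam_1$ in $N$. The only difference is minor: the paper keeps $(s,0)$ because $(sa)^2=s\cdot sa^2\in sI\subseteq N(R)$ already gives $sa\in N(R)$, whereas you pass to $(s^2,0)$; you are also more careful than the paper in stating the hypothesis $IM\subseteq N$ that makes $I(+)N$ an ideal.
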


\begin{proof} $(i)$
Since $I \cap S = \phi$ then $I(+)N \cap S(+)M= \phi$. Since $I(+)N$ is a quasi $S(+)M$-$n$-ideal of $R(+)M$, there exists $(s,m)\in S(+)M$ such that for any $(a,x),(b,y)\in R(+)M$, $(a,x)(b,y)\in I(+)N$ implies $(s,m)(a,x)^2\in I(+)N \quad \text{or} \quad (s,m)(b,y)\in N(R(+)M)$, where $N(R(+)M)$ denotes the nilradical of $R(+)M$. To show that $I$ is a quasi $S$-$n$-ideal of $R$, let $a,b\in R$ be such that $ab\in I$. Then $(a,0)(b,0)=(ab,0)\in  I(+)N$. Since $I(+)N$ is a quasi $S(+)M$-$n$-ideal, it follows that $(s,m )(a,0)^2\in I(+)N \quad \text{or} \quad (s,m)(b,0)\in N(R(+)M)$. Now, $(a,0)^2=(a^2,0)$, and hence $(s,m)(a^2,0)=(sa^2,a^2m)$. If $(sa^2,a^2m)\in I(+)N$, then necessarily $sa^2\in I$. On the other hand, $(s,m)(b,0)=(sb,bm)$. If $(sb,bm)\in N(R(+)M)$, then $sb\in N(R)$, since the nilradical of the idealization satisfies $N(R(+)M)=N(R)(+)M$.

Therefore, $ab\in I \implies sa^2\in I \ \text{or}\ sb\in N(R)$. Hence $I$ satisfies the defining condition of a quasi $S$-$n$-ideal of $R$. Therefore $I$ is a quasi $S$-$n$-ideal of $R$.

    $(ii)$ 
Let $R$ be a commutative semiring, $M$ be an $R$-semimodule, $S$ be a multiplicatively closed subset of $R$, and $N$ be a subsemimodule of $M$ such that $ N(R)M\subseteq N$. Since $I$ is a quasi $S$-$n$-ideal of $R$, there exists an element $s\in S$ such that whenever $a,b\in R$ and $ab\in I$, then $sa^2\in I \quad \text{or} \quad sb\in N(R)$. Now we show that  $I(+)N$ is a quasi $S(+)M$-$n$-ideal of $R(+)M$.

Let $(a,x),(b,y)\in R(+)M$ be such that $(a,x)(b,y)\in I(+)N$.
Then $(ab,ay+bx)\in I(+)N$, and consequently $ab\in I$. Since $I$ is a quasi $S$-$n$-ideal of $R$, it follows that $sa^2\in I \quad \text{or} \quad sb\in N(R)$. Now, $(a,x)^2=(a^2,ax+ax)$ and then $(s,0)(a,x)^2 =(sa^2,2sax)$. If $sa^2\in I$, since  $sI \subseteq N(R)$ which implies $sa \in N(R)$, Therefore $2sax\in N(R)M \subseteq N $. Thus we obtain $(s,0)(a,x)^2\in I(+)N$. On the other hand, if $sb\in N(R)$, then there exists $n\in\mathbb{N}$ such that $(sb)^n=0$. Now $(s,0)(b,y)=(sb,sy)$. Thus $(sb,sy)$ is nilpotent in $R(+)M$, and therefore $(s,0)(b,y)\in N(R(+)M)=N(R)(+)M$.

Hence, whenever $(a,x)(b,y)\in I(+)N$, we have either $(s,0)(a,x)^2\in I(+)N$ or $(s,0)(b,y)\in N(R(+)M)$. Since $(s,0)\in S(+)M$, it follows that $I(+)N$ is a quasi $S(+)M$-$n$-ideal of $R(+)M$. \end{proof}

\begin{Example}\label{3.2} In this example we see that $I(+)N$ is a quasi $S(+)M$-$n$-ideal of $R(+)M$ but $I(+)N$ is not a $S(+)M$-$n$-ideal of $R(+)M$.

Let $ R=\mathbb Z^+$, $ M=\mathbb Z_{12}^+,$ $ S=\{2^n
: n \in \mathbb N_0\} $ and let $ I =(0)$ and $N={\bar 0}$. However, $0(+)\bar 0$ is a quasi $(S(+)\mathbb Z_{12}^+)$-$n$-ideal $\mathbb Z^+(+)\mathbb Z_{12}^+$  but not a $(S(+)\mathbb Z_{12}^+)$-$n$-ideal of $\mathbb Z^+(+)\mathbb Z_{12}^+$ . Since, $ (0,\bar 4)(3,\bar 0)=(0,\bar 0)\in 0(+)\bar 0$ . Let $(s,m)\in S(+)\mathbb Z_{12}^+$, then $(s,m)(0,\bar 4)= (0,\bar {4s})\neq (0,\overline {0})$. So, $(s,m)(0,\bar 4) \notin 0(+)\bar 0$. On the other hand, $(s,m)(3,\bar 0)=(3s,\bar {3m})$. We have $3s\neq 0$. Consequently, $(s,m)(3,\bar 0)\notin N(\mathbb Z^+(+)\mathbb Z_{12}^+$). Thus, for $(s,m)\in S(+)\mathbb Z_{12}^+$, $(s,m)(0,\bar 4)\notin 0(+)\bar 0$ and  $(s,m)(3,\bar 0)\notin N(\mathbb Z^+(+)\mathbb Z_{12}^+)$. Since $(3,\bar 0)(0,\bar 4)\in 0(+)\bar 0$, the defining condition of a $(S(+)\mathbb Z_{12}^+)$-$n$-ideal fails. Therefore, $0(+)\bar 0$ is not a $(S(+)\mathbb Z_{12}^+)$-$n$-ideal of $\mathbb Z^+(+)\mathbb Z_{12}^+$.

Since $ I =(0)$ is a quasi $S$-$n$-ideal of $ R=\mathbb Z^+$ and $N(R)M\subseteq N$. So by the Proposition \ref{3.1},  $I(+)N$ is a quasi $S(+)M$-$n$-ideal of $R(+)M$.  We have $ (0,\bar 4)^2=(0,\overline {0})$, and therefore $(s,m)(0,\bar 4)^2=(0,\overline {0})\in 0(+)\bar 0$.
\end{Example}

\begin{Remark}
    Since any element of the form $(0,\bar a) \in Z^+(+)\mathbb Z_{n}^+$, where $ \bar a \in \mathbb Z_{n}^+$ is a nilpotent element of $ Z^+(+)\mathbb Z_{n}^+$. 
\end{Remark}

Let $R_1$ and $R_2$ be two commutative semirings with identity, let $J$ be an ideal of $R_2$, and let $f:R_1\longrightarrow R_2$ be a semiring homomorphism. We define $R_1\bowtie^{f}J = \{(a,f(a)+j)\mid a\in R_1,\; j\in J\}$. It is easy to verify that $R_1\bowtie^{f}J$ is a subsemiring of the direct product semiring $R_1\times R_2$. This semiring is called the amalgamation of $R_1$ with $R_2$ along the ideal $J$ with respect to $f$. This construction was introduced and studied by D'Anna et al.~\cite{an, an1} in the context of commutative rings.

Let $I$ be an ideal of $R_1$. Corresponding to $I$, we define the following ideals of $R_1\bowtie^{f}J$: $I\bowtie^{f}J = \{(i,f(i)+j)\mid i\in I,\; j\in J\}$. Now let $S$ be a multiplicatively closed subset of $R_1$. Then $S\bowtie^{f}J = \{(s,f(s)+j)\mid s\in S,\; j\in J\}$ and $S'=\{(s,f(s))\mid s\in S\}$ are multiplicatively closed subsets of $R_1\bowtie^{f}J$. Furthermore, if $0\notin f(S)$, then $f(S)=\{f(s)\mid s\in S\}$ is a multiplicatively closed subset of $R_2$.







\begin{Theorem} \label{t3.4}
Let $R\bowtie^{f}J$ be the amalgamation of commutative semirings $R$ and $R'$ along the ideal $J$ of $R'$ with respect to a semiring homomorphism $f:R\to R'$. Let $S$ be a multiplicatively closed subset of $R$ and let $I$ be an ideal of $R$ with $I\cap S=\phi$. Then the following assertions hold:

$(i)$ If $I\bowtie^{f}J$ is a quasi $S'$-$n$-ideal of $R\bowtie^{f}J$, then $I\bowtie^{f}J$ is a quasi $(S\bowtie^{f}J)$-$n$-ideal of $R\bowtie^{f}J$.

$(ii)$ If $I\bowtie^{f}J$ is a quasi $(S\bowtie^{f}J)$-$n$-ideal of $R\bowtie^{f}J$, then $I$ is a quasi $S$-$n$-ideal of $R$.
\end{Theorem}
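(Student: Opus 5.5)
(i) follows from the containment $S'\subseteq S\bowtie^{f}J$, obtained by taking $j=0$. Suppose $I\bowtie^{f}J$ is a quasi $S'$-$n$-ideal. Then there is an element $(s,f(s))\in S'$ such that for all $x,y\in R\bowtie^{f}J$, $xy\in I\bowtie^{f}J$ implies $(s,f(s))x^2\in I\bowtie^{f}J$ or $(s,f(s))y\in N(R\bowtie^{f}J)$. The same element lies in $S\bowtie^{f}J$, so it serves directly as a quasi $(S\bowtie^{f}J)$-$n$ element. We still need disjointness, i.e.\ $(I\bowtie^{f}J)\cap(S\bowtie^{f}J)=\phi$. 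If $(s,f(s)+j)$ lay in $I\bowtie^{f}J$, its first coordinate would give $s\in I$, contradicting $I\cap S=\phi$. Properness of $I\bowtie^{f}J$ follows from properness of $I$, since $(1,1)\notin I\bowtie^{f}J$ because $1\notin I$.

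For (ii), let $(s,f(s)+j_0)\in S\bowtie^{f}J$ be a quasi $(S\bowtie^{f}J)$-$n$ element of $I\bowtie^{f}J$. I claim that $s$ is a quasi $S$-$n$ element of $I$. Take $a,b\in R$ with $ab\in I$ and embed them diagonally as $(a,f(a))$ and $(b,f(b))$. Their product $(ab,f(ab))$ lies in $I\bowtie^{f}J$ by taking $j=0$. The hypothesis then gives one of two cases.

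In the first case $(s,f(s)+j_0)(a,f(a))^2=(sa^2,\,\ast)\in I\bowtie^{f}J$. Membership in $I\bowtie^{f}J$ forces the first coordinate into $I$, so $sa^2\in I$. In the second case $(s,f(s)+j_0)(b,f(b))=(sb,\,\ast)$ is nilpotent in $R\bowtie^{f}J$. Since $R\bowtie^{f}J$ is a subsemiring of $R\times R'$ with componentwise operations, some power vanishes, and the first coordinate gives $(sb)^n=0$, i.e.\ $sb\in N(R)$. Hence $ab\in I$ implies $sa^2\in I$ or $sb\in N(R)$. Together with $I\cap S=\phi$ and $I$ proper, this shows $I$ is a quasi $S$-$n$-ideal.

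The only point needing care is the step ``membership in $I\bowtie^{f}J$ forces the first coordinate into $I$''. This is immediate from the definition $I\bowtie^{f}J=\{(i,f(i)+j)\}$: every element of that set has its first coordinate in $I$. So there is no real obstacle. The mild subtlety is only to use the first-coordinate projection $R\bowtie^{f}J\to R$, which is a semiring homomorphism. Nilpotents map to nilpotents under it, and $I\bowtie^{f}J$ maps into $I$. No analogue of Lemma~\ref{11} describing $N(R\bowtie^{f}J)$ is needed.
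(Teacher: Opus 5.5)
Your proof is correct and follows the paper's argument essentially verbatim. Part (i) uses the inclusion $S'\subseteq S\bowtie^{f}J$, and part (ii) uses the diagonal elements $(a,f(a)),(b,f(b))$ together with the first-coordinate projection $R\bowtie^{f}J\to R$; your explicit check that $(I\bowtie^{f}J)\cap(S\bowtie^{f}J)=\phi$ in (i) fills in a detail the paper leaves implicit.
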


\begin{proof}
$(i)$ Since $S'\subseteq S\bowtie^{f}J$, every quasi $S'$-$n$-ideal of $R\bowtie^{f}J$ is a quasi $(S\bowtie^{f}J)$-$n$-ideal of $R\bowtie^{f}J$. This proves $(1)$. 

$(ii)$ Let $x,y\in R$ such that $xy\in I$. Then $(x,f(x))(y,f(y))=(xy,f(xy)) \in I\bowtie^{f}J$.
Since $I\bowtie^{f}J$ is a quasi $(S\bowtie^{f}J)$-$n$-ideal of $R\bowtie^{f}J$, there exists a fixed element $(s,f(s)+j)\in S\bowtie^{f}J$, where $s \in S$ and $j \in J$, such that 
$$(s,f(s)+j)(x,f(x))^2\in I\bowtie^{f}J \text{  or  } (s,f(s)+j)(y,f(y))\in N(R\bowtie^{f}J).$$

If $(s,f(s)+j)(x,f(x))^2\in I\bowtie^{f}J$, then $(sx^2,f(sx^2)+jf(x^2)) \in I\bowtie^{f}J$, which implies that $sx^2\in I$. If $(s,f(s)+j)(y,f(y))\in N(R\bowtie^{f}J)$, then $(sy, f(sy)+jf(y)) \in N(R\bowtie^{f}J)$. The canonical projection $\pi:R\bowtie^{f}J\longrightarrow R$, defined by $\pi(r,f(r)+j)=r$, is a semiring homomorphism, it follows that $sy\in N(R)$. Hence, there exists $s\in S$ such that $sx^2\in I$ or $sy\in N(R)$. Therefore, $I$ is a quasi-$S$-$n$-ideal of $R$. \end{proof}

\begin{Theorem}
Let $R\bowtie^{f}J$ be the amalgamation of commutative semirings $R$ and $R'$ along the ideal $J$ of $R'$ with respect to a semiring homomorphism $f:R\to R'$. Let $S$ be a multiplicatively closed subset of $R$, and let $I$ be an ideal of $R$ such that $I\cap S=\phi$ and $J \subseteq N(R') $. Then the following assertions are equivalent: 

$(i)$ $I\bowtie^{f}J$ is a quasi $S'$-$n$-ideal of $R\bowtie^{f}J$;

$(ii)$ $I\bowtie^{f}J$ is a quasi $(S\bowtie^{f}J)$-$n$-ideal of $R\bowtie^{f}J$;

$(iii)$ $I$ is a quasi $S$-$n$-ideal of $R$. 
\end{Theorem}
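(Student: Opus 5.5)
The chain of implications will be $(i)\Rightarrow(ii)\Rightarrow(iii)\Rightarrow(i)$. The first two are already contained in Theorem \ref{t3.4}. Part $(i)$ of that theorem gives $(i)\Rightarrow(ii)$, because $S'\subseteq S\bowtie^{f}J$. Part $(ii)$ gives $(ii)\Rightarrow(iii)$, via the projection $\pi:R\bowtie^{f}J\to R$. So the real work is $(iii)\Rightarrow(i)$, and here the hypothesis $J\subseteq N(R')$ will be used.

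The first step is to identify the nilradical of the amalgamation under this hypothesis, namely to show
\[
N(R\bowtie^{f}J)=\{(a,f(a)+j)\mid a\in N(R),\ j\in J\}.
\]
The inclusion $\subseteq$ comes from the projection $\pi$, exactly as in Theorem \ref{t3.4}. For $\supseteq$, take $a^k=0$ and $j\in J\subseteq N(R')$. Then $f(a)^k=f(a^k)=0$, so $f(a)$ is nilpotent. Since $N(R')$ is an ideal of $R'$, in particular closed under addition, $f(a)+j\in N(R')$, say $(f(a)+j)^l=0$. With $t=\max(k,l)$ we get $(a,f(a)+j)^t=(0,0)$. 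I should check that $N(R')$ really is closed under sums in the semiring setting. The binomial expansion holds in any commutative semiring, so $(u+v)^{p+q}$ is a sum of terms each of which contains $u^p$ or $v^q$ as a factor. This argument does not need subtraction.

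Next I check that $I\bowtie^{f}J$ is proper and disjoint from $S'$. If $(s,f(s))=(i,f(i)+j)$, then $s=i\in I\cap S$, which is a contradiction. Properness follows because $(1,f(1))\notin I\bowtie^{f}J$, since $1\notin I$.

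For the main implication, let $s$ be a quasi $S$-$n$ element of $I$ and use $(s,f(s))\in S'$. Suppose
\[
(a,f(a)+j_1)(b,f(b)+j_2)=(ab,\,f(ab)+j)\in I\bowtie^{f}J .
\]
Comparing first coordinates gives $ab\in I$, so either $sa^2\in I$ or $sb\in N(R)$.

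In the first case, $(s,f(s))(a,f(a)+j_1)^2$ has first coordinate $sa^2\in I$. Its second coordinate is $f(s)(f(a)+j_1)^2=f(sa^2)+j'$ with $j'\in J$. This uses only that $J$ is an ideal: expanding the square, every term other than $f(s)f(a)^2$ contains a factor $j_1$. Hence the product lies in $I\bowtie^{f}J$.

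In the second case, $(s,f(s))(b,f(b)+j_2)=(sb,\,f(sb)+f(s)j_2)$, and $f(s)j_2\in J$. Since $sb\in N(R)$ and $f(s)j_2\in J$, the nilradical description above shows this element lies in $N(R\bowtie^{f}J)$.

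The step I expect to need the most care is the nilradical description, and specifically that the hypothesis $J\subseteq N(R')$ suffices without any subtraction in $R'$. The remaining steps are routine coordinate comparisons.
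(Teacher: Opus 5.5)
Your proposal is correct and follows the paper's proof. You take $(i)\Rightarrow(ii)\Rightarrow(iii)$ from Theorem \ref{t3.4}, and for $(iii)\Rightarrow(i)$ you use the element $(s,f(s))\in S'$ with the same two-case coordinate computation; your explicit description of $N(R\bowtie^{f}J)$ and the subtraction-free argument that $N(R')$ is closed under addition just make rigorous a step the paper asserts in one line.
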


\begin{proof}
$(i) \implies (ii)$ follows from Theorem \ref{t3.4} $(i)$ and $(ii) \implies (iii)$ follows from Theorem \ref{t3.4} $(2)$.

$(iii) \implies (i)$: Let  $(x,f(x)+j_1)(y,f(y)+j_2)=(xy,(f(x)+j_1)(f(y)+j_2)) \in I\bowtie^{f}J$, where $x,y \in R$ and $j_1, j_2 \in J$. Then $xy \in I$. Since $I$ is a quasi $S$-$n$-ideal of $R$, there exists an fixed element $s \in S$ such that, whenever $a,b \in R$ and $ab \in I$, either $sa^2 \in I$ or $sb \in N(R)$. Therefore $sx^2 \in I$ or $sy\in N(R)$. Consider the element $(s,f(s)) \in S'$.
If $sx^2 \in I$, then $(s,f(s))(x,f(x)+j_1)^2=((s,f(s))(x^2,f(x^2)+2j_1f(x)+j^2_1)=(sx^2,f(sx^2)+j)$ for some $j \in J$. Hence,  $(s,f(s))(x,f(x)+j_1)^2 \in I\bowtie^{f}J$. Suppose that $sy\in N(R)$. Since $f$ is a semiring homomorphism, $f(sy)\in N(R')$. Moreover, $j_2\in J\subseteq N(R')$, and hence $f(s)j_2\in N(R')$. Since $R'$ is commutative, it follows that $f(sy)+f(s)j_2\in N(R')$.
Consequently, $(s,f(s))(y,f(y)+j_2)
=(sy,f(sy)+f(s)j_2) \in N(R\bowtie^{f}J)$.

Thus, $(s,f(s))(x,f(x)+j_1)^2 \in I\bowtie^{f}J$ or $(s,f(s))(y,f(y)+j_2) \in N(R\bowtie^{f}J)$. Hence, $I\bowtie^{f}J$ is a quasi $S'$-$n$-ideal of $R\bowtie^{f}J$.
\end{proof}


\vspace{0.5cm}
\noindent
\textbf{Funding.}
The first author gratefully acknowladges the financial support from the Council of Scientific and Industrial Research (CSIR), India under File No. 09/0096(20069)/2024-EMR-I.

\vspace{0.5cm}
\noindent
\textbf{Conflict of interest.} All authors declare that they have no conflicts of interest.

\end{document}